\def\textcross{
	\begin{minipage}{13pt}
		\begin{tikzpicture}[scale=1]
			\cpipedream{0.4}{(0,0)}{0/0/black/black}
		\end{tikzpicture}
	\end{minipage}
}
\def\textelbow{
	\begin{minipage}{13pt}
		\begin{tikzpicture}[scale=1]
			\tpipedream{0.4}{(0,0)}{0/0/black/black}
		\end{tikzpicture}
	\end{minipage}
}
\newcommand{\content}[3]{ 
  \coordinate (C) at #2;
	
  \foreach \X/\Y/\object in {#3} {
		\node at ($ (C) + ( #1 * \X, -#1 * \Y ) + ( #1 / 2, -#1 / 2 )$) {\object};
	}
}
\newcommand{\tpipedream}[3]{ 
  \coordinate (P) at #2;
	
  \foreach \x/\y/\a/\b in {#3} {
		\coordinate (P1) at ($ (P) + ( #1 * \x , -#1 * \y ) + ( 0      , #1 / 2 ) $);
	  \coordinate (P2) at ($ (P) + ( #1 * \x , -#1 * \y ) + ( #1     , #1 / 2 ) $);
	  \coordinate (P3) at ($ (P) + ( #1 * \x , -#1 * \y ) + ( #1 / 2 , #1     ) $);
	  \coordinate (P4) at ($ (P) + ( #1 * \x , -#1 * \y ) + ( #1 / 2 , 0 ) $);
	  \coordinate (P5) at ($ (P) + ( #1 * \x , -#1 * \y ) + ( #1 / 2 , #1 / 2 ) $);
	  \draw[rounded corners=4, color=\a, thick] (P1) -- (P5) -- (P3);
	  \draw[rounded corners=4, color=\b, thick] (P4) -- (P5) -- (P2);
	}

}
\newcommand{\cpipedream}[3]{ 
  \coordinate (P) at #2;
	
  \foreach \x/\y/\a/\b in {#3} {
		\coordinate (P1) at ($ (P) + ( #1 * \x , -#1 * \y ) + ( 0      , #1 / 2 ) $);
	  \coordinate (P2) at ($ (P) + ( #1 * \x , -#1 * \y ) + ( #1     , #1 / 2 ) $);
	  \coordinate (P3) at ($ (P) + ( #1 * \x , -#1 * \y ) + ( #1 / 2 , #1     ) $);
	  \coordinate (P4) at ($ (P) + ( #1 * \x , -#1 * \y ) + ( #1 / 2 , 0 ) $);
	  \coordinate (P5) at ($ (P) + ( #1 * \x , -#1 * \y ) + ( #1 / 2 , #1 / 2 ) $);
	  \draw[rounded corners=0.2, color=\a, thick] (P1) -- (P5) -- (P3);
	  \draw[rounded corners=0.2, color=\b, thick] (P4) -- (P5) -- (P2);
	}

}
\definecolor{grey}{gray}{0.6}
\title[Maximal $0$-$1$-fillings of moon polyominoes and
rc-graphs]{Maximal $0$-$1$-fillings of moon polyominoes with
  restricted chain lengths and rc-graphs}
\author{Martin Rubey}
\address{Institut f\"ur Algebra, Zahlentheorie und Diskrete Mathematik,Leibniz
  Universit\"at Hannover, Welfengarten 1, D-30167 Hannover, Germany}
\email{martin.rubey@math.uni-hannover.de}
\urladdr{http://www.iazd.uni-hannover.de/~rubey/}
\keywords{multitriangulations, rc-graphs, Edelman-Greene insertion,
  Schubert polynomials}
\newtheorem{thm}{Theorem}[section]
\newtheorem{lem}[thm]{Lemma}
\newtheorem{prop}[thm]{Proposition}
\newtheorem{cnj}[thm]{Conjecture}
\theoremstyle{definition}
\newtheorem{dfn}[thm]{Definition}
\theoremstyle{remark}
\newtheorem{rmk}{Remark}
\newtheorem*{eg*}{Example}
\newcommand{\Set}[1]{\ensuremath{\mathcal{#1}}}            
\newcommand{\Dfn}[1]{\emph{#1}}                            
\newcommand{\Mat}[1]{\ensuremath{\mathbf{#1}}}             
\newcommand{\naturals}{\mathbb N}
\def\x{\mbox{$\bullet$}}
\def\g{\mbox{$\color{grey}\bullet$}}
\def\moon{{\tiny$\young(:\hfil\hfil,\hfil\hfil\hfil\hfil,\hfil\hfil\hfil\hfil,:\hfil\hfil)$}}
\def\adots{{.\hspace{1pt}\raisebox{2pt}{.}\hspace{1pt}\raisebox{4pt}{.}}}
\begin{document}
\maketitle

\begin{abstract}
  We show that maximal $0$-$1$-fillings of moon polynomials with
  restricted chain lengths can be identified with certain rc-graphs,
  also known as pipe dreams.  In particular, this exhibits a
  connection between maximal $0$-$1$-fillings of Ferrers shapes and
  Schubert polynomials.  Moreover, it entails a bijective proof
  showing that the number of maximal fillings of a stack polyomino
  $S$ with no north-east chains longer than $k$ depends only on $k$
  and the multiset of column heights of $S$.

  Our main contribution is a slightly stronger theorem, which in turn
  leads us to conjecture that the poset of rc-graphs with covering
  relation given by generalised chute moves is in fact a lattice.
\end{abstract}

\section{Introduction}
\label{sec:introduction}

\subsection{Triangulations, multitriangulations and $0$-$1$-fillings}
\sloppypar
The systematic study of $0$-$1$-fillings of polyominoes with
restricted chain lengths likely originates in an article by Jakob
Jonsson~\cite{Jonsson2005}.  At first, he was interested in a
generalisation of triangulations, where the objects under
consideration are maximal sets of diagonals of the $n$-gon, such that
at most $k$ diagonals are allowed to cross mutually.  Thus, in the
case $k=1$ one recovers ordinary triangulations.  He realised these
objects as fillings of the staircase shaped polyomino with
row-lengths $n-1, n-2,\dots,1$ with zeros and ones.  The condition
that at most $k$ diagonals cross mutually then translates into the
condition that the longest north-east chain in the filling has length
$k$, see Definition~\ref{dfn:fillings-and-chains}.  Instead of
studying fillings of the staircase shape only, he went on to consider
more general shapes which he called \Dfn{stack} and \Dfn{moon
  polyominoes}, see Definition~\ref{dfn:moon} and
Figure~\ref{fig:moon}.

For stack polyominoes he was able to prove that the number of maximal
fillings depends only on $k$ and the multiset of heights of the
columns, not on the particular shape of the polyomino.  He
conjectured that this statement holds more generally for moon
polyominoes, which was eventually proved by the
author~\cite{Rubey2006} using a technique introduced by Christian
Krattenthaler~\cite{Krattenthaler2006} based on Sergey Fomin's growth
diagrams for the Robinson-Schensted-Knuth correspondence.  However,
the proof given there is not fully bijective: what one would hope for
is a correspondence between fillings of any two moon polyominoes that
differ only by a permutation of the columns.  This article is a step
towards this goal.

\subsection{RC-graphs and the subword complex}

RC-graphs (for \lq reduced word compatible sequence graphs\rq,
see~\cite{MR1281474}, also known as \lq pipe dreams\rq\
see~\cite{MR2180402}) were introduced by Sergey Fomin and Anatol
Kirillov~\cite{MR1394950} to prove various properties of Schubert
polynomials.  Namely, for a given permutation $w$, the Schubert
polynomial $\mathfrak S_w$ can be regarded as the generating function
of rc-graphs, see Remark~\ref{rmk:Schubert}.

A different point of view is to consider them as facets of a certain
simplicial complex.  Let $w_0$ be the long permutation $n\cdots21$,
and consider its reduced factorisation
$$Q=s_{n-1}\cdots s_2 s_1\; s_{n-1}\cdots s_3 s_2\; \cdots\cdots\; s_{n-1}s_{n-2}\; s_{n-1}.$$
Then the subword complex associated to $Q$ and $w$ introduced by
Allen Knutson and Ezra Miller~\cite{MR2180402,MR2047852} has as
facets those subwords of $Q$ that are reduced factorisations of $w$.
Subword complexes enjoy beautiful topological properties, which are
transferred by the main theorem of this article to the simplicial
complex of $0$-$1$-fillings, as observed by Christian
Stump~\cite{Stump2010}, see also the article by Luis Serrano and
Christian Stump~\cite{SerranoStump2010}.

The intimate connection between maximal fillings and rc-graphs
demonstrated by the main theorem of this article,
Theorem~\ref{thm:filling-dream}, \emph{should} not have come as a
surprise.  Indeed, Sergey Fomin and Anatol Kirillov \cite{MR1471891}
established a connection between reduced words and reverse plane
partitions already thirteen years ago, which is not much less than
the case of Ferrers shapes in Theorem~\ref{thm:ne-se}.  They even
pointed towards the possibility of a bijective proof using the
Edelman-Greene correspondence.

More recently, the connection between Schubert polynomials and
triangulations was noticed by Alexander Woo~\cite{Woo2004}.  Vincent
Pilaud and Michel Pocchiola~\cite{PilaudPocchiola2009} discovered
rc-graphs (under the name \lq beam arrangements\rq) more generally
for multitriangulations, however, they were unaware of the theory of
Schubert polynomials.  In particular, Theorem 3.18 of Vincent
Pilaud's thesis~\cite{Pilaud2010} (see also Theorem~21
of~\cite{PilaudPocchiola2009}) is a variant of our
Theorem~\ref{thm:filling-dream} for multitriangulations.

Finally, Christian Stump and the author of the present article became
aware of an article by Vincent Pilaud and Francisco
Santos~\cite{MR2471876} that describes the structure of
multitriangulations in terms of so-called $k$-stars (introduced by
Harold Coxeter).  We then decided to translate this concept to the
language of fillings, and discovered pipe dreams yet again.

\section{Definitions}
\label{sec:definitions}

\subsection{Polyominoes}
\label{sec:polyominoes}
\begin{figure}[h]
  \begin{equation*}
  \begin{array}{ccc}
  \young(:::\hfil,%
         ::\hfil\hfil\hfil,%
         ::\hfil\hfil\hfil\hfil,%
         \hfil\hfil\hfil\hfil\hfil\hfil\hfil,%
         \hfil\hfil\hfil\hfil\hfil\hfil\hfil,%
         :\hfil\hfil\hfil\hfil\hfil\hfil,%
         :::\hfil\hfil)  
  &
  \young(\hfil\hfil\hfil\hfil\hfil\hfil\hfil,%
         \hfil\hfil\hfil\hfil\hfil\hfil\hfil,%
         ::\hfil\hfil\hfil\hfil,%
         ::\hfil\hfil\hfil,%
         :::\hfil)
  &
  \young(\hfil\hfil\hfil\hfil\hfil\hfil\hfil,%
         \hfil\hfil\hfil\hfil\hfil\hfil\hfil,%
         \hfil\hfil\hfil\hfil,%
         \hfil\hfil\hfil,%
         \hfil)
  \end{array}
  \end{equation*}
  \caption{a moon-polyomino, a stack-polyomino and a Ferrers diagram}
  \label{fig:moon}
\end{figure}

\begin{dfn}\label{dfn:polyominoes}
  A \Dfn{polyomino} is a finite subset of the quarter plane
  $\naturals^2$, where we regard an element of $\naturals^2$ as a
  cell.  A \Dfn{column} of a polyomino is the set of cells along a
  vertical line, a \Dfn{row} is the set of cells along a horizontal
  line.  We are using \lq English\rq\ (or matrix) conventions for the
  indexing of the rows and columns of polyominoes: the top row and
  the left-most column have index $1$.

  The polyomino is \Dfn{convex}, if for any two cells in a column
  (rsp. row), the elements of $\naturals^2$ in between are also cells
  of the polyomino.  It is \Dfn{intersection-free}, if any two
  columns are \Dfn{comparable}, {\it i.e.}, the set of row
  coordinates of cells in one column is contained in the set of row
  coordinates of cells in the other.  Equivalently, it is
  intersection-free, if any two rows are comparable.

  For example, the polyomino
  \begin{equation*}
    \young(::\hfil,%
    ::\hfil\hfil\hfil,%
    \hfil\hfil\hfil\hfil\hfil,%
    \hfil\hfil\hfil\hfil,%
    ::\hfil)  
  \end{equation*}
  is convex, but not intersection-free, since the first and the last
  columns are incomparable.
\end{dfn}

\begin{dfn}\label{dfn:moon}
  A \Dfn{moon polyomino} (or L-convex polyomino) is a convex,
  intersection-free polyomino.  Equivalently we can require that any
  two cells of the polyomino can be connected by a path consisting of
  neighbouring cells in the polyomino, that changes direction at most
  once.  A \Dfn{stack polyomino} is a moon-polyomino where all
  columns start at the same level.  A \Dfn{Ferrers diagram} is a
  stack-polyomino with weakly decreasing row widths
  $\lambda_1,\lambda_2,\dots,\lambda_n$, reading rows from top to
  bottom.

  Because a moon-polyomino is intersection free, the set of rows of
  maximal length in a moon polyomino must be consecutive.  We call
  the set of rows including these and the rows above the \Dfn{top
    half} of the polyomino.  Similarly, the set of columns of maximal
  length, and all columns to the right of these, is the \Dfn{right
    half} of the polyomino.  The intersection of the top and the
  right half is the \Dfn{top right quarter} of $M$.
\end{dfn}

\subsection{Fillings and Chains}
\label{sec:fillings-chains}
\begin{dfn}\label{dfn:fillings-and-chains}
  A \Dfn{$0$-$1$-filling} of a polyomino is an assignment of numbers
  $0$ and $1$ to the cells of the polyomino.  Cells containing $0$
  are also called \Dfn{empty}.

  A \Dfn{north-east chain} is a sequence of non-zero entries in a
  filling such that the smallest rectangle containing all its
  elements is completely contained in the moon polyomino and such
  that for any two of its elements one is strictly to the right and
  strictly above the other.
\end{dfn}
As it turns out, it is more convenient to draw dots instead of ones
and leave cells filled with zeros empty.  Two examples of (rather
special) fillings of a moon polyomino are depicted in
Figure~\ref{fig:top-bot}.  In both examples the length of the longest
north-east chain is $2$.

\begin{dfn}
  $\Set F_{01}^{ne}(M, k)$ is the set of $0$-$1$-fillings of the moon
  polyomino $M$ whose longest north-east chain has length $k$ and
  that are \Dfn{maximal}, {\it i.e.}, assigning an empty cell a $1$
  would create a north-east chain of length $k+1$.  For a vector
  $\Mat r$ of integers, $\Set F_{01}^{ne}(M, k, \Mat r)$ is the
  subset of $\Set F_{01}^{ne}(M, k)$ consisting of those fillings
  that have exactly $\Mat r_i$ zero entries in row $i$.

  For any filling in $\Set F_{01}^{ne}(M, k)$, and an empty cell
  $\epsilon$, there must be a chain $C$ such that replacing the $0$
  with $1$ in $\epsilon$, and adding $\epsilon$ to $C$, would make
  $C$ into a $(k+1)$-chain.  In this situation, we say that $C$ is a
  \Dfn{maximal chain for} $\epsilon$.
\end{dfn}
For example, when $M$ is the moon polyomino \moon, the set $\Set
F_{01}^{ne}(M, 1)$ consists of ten fillings, as can be inferred from
Figure~\ref{fig:poset}.

\begin{rmk}
  Note that extending the first $k$ rows and columns of a Ferrers
  diagram does not affect the set $\Set F_{01}^{ne}$, which is why we
  choose to fix the number of zero entries instead of entries equal
  to $1$, although the latter might seem more natural at first
  glance.
\end{rmk}
\begin{figure}
  \centering
\begin{tikzpicture}[scale=0.6]
\node (v1) at (2.5cm, 5.0cm)      [draw=none] {$1$};
\node (v7) at (0.4952cm,4.0097cm) [draw=none] {$7$};
\node (v6) at (0.0cm,1.7845cm)    [draw=none] {$6$};
\node (v5) at (1.3874cm,0.0cm)    [draw=none] {$5$};
\node (v4) at (3.6126cm,0.0cm)    [draw=none] {$4$};
\node (v3) at (5.0cm,1.7845cm)    [draw=none] {$3$};
\node (v2) at (4.5048cm,4.0097cm) [draw=none] {$2$};
\draw [thick,grey] (v1) to (v2);
\draw [thick,grey] (v1) to (v3);
\draw [thick]      (v1) to (v5);
\draw [thick,grey] (v1) to (v6);
\draw [thick,grey] (v1) to (v7);
\draw [thick,grey] (v2) to (v3);
\draw [thick,grey] (v2) to (v4);
\draw [thick]      (v2) to (v5);
\draw [thick,grey] (v2) to (v7);
\draw [thick,grey] (v3) to (v4);
\draw [thick,grey] (v3) to (v5);
\draw [thick]      (v3) to (v6);
\draw [thick]      (v3) to (v7);
\draw [thick,grey] (v4) to (v5);
\draw [thick,grey] (v4) to (v6);
\draw [thick,grey] (v5) to (v6);
\draw [thick,grey] (v5) to (v7);
\draw [thick,grey] (v6) to (v7);
\content{0.78}{(6.7,5.5)}{%
  0/0/$1$,1/0/$2$,2/0/$3$,3/0/$4$,4/0/$5$,5/0/$6$,
  -1/1/$7$,-1/2/$6$,-1/3/$5$,-1/4/$4$,-1/5/$3$,-1/6/$2$}
\node at (9,2.4)
{\young(\g\g\x\hfil\g\g,\g\hfil\x\g\g,\x\x\g\g,\hfil\g\g,\g\g,\g)};
\end{tikzpicture}

\begin{tikzpicture}[scale=0.95]
\node (v1) at (1cm, 6cm)          [draw=none, grey] {$\bullet$};
\node (v2) at (1.5cm, 6cm)        [draw=none, grey] {$\bullet$};
\node (v3) at (2cm, 6cm)        [draw=none] {$\bullet$};
\node (v4) at (2.5cm, 6cm)        [draw=none] {$\bullet$};
\node (v5) at (3cm, 6cm)        [draw=none] {$\bullet$};
\node (v6) at (3cm, 5.5cm)        [draw=none] {$\bullet$};
\node (v7) at (3.5cm, 5.5cm)        [draw=none] {$\bullet$};
\node (v8) at (3.5cm, 5cm)        [draw=none] {$\bullet$};
\node (v9) at (3.5cm, 4.5cm)        [draw=none] {$\bullet$};
\node (v10) at (3.5cm, 4cm)        [draw=none, grey] {$\bullet$};
\node (v11) at (3.5cm, 3.5cm)        [draw=none, grey] {$\bullet$};
\node (w1) at (1.5cm, 5.5cm)          [draw=none] {$\bullet$};
\node (w2) at (2cm, 5.5cm)        [draw=none] {$\bullet$};
\node (w3) at (2cm, 5cm)        [draw=none] {$\bullet$};
\node (w4) at (2.5cm, 5cm)        [draw=none] {$\bullet$};
\node (w5) at (3cm, 5cm)        [draw=none] {$\bullet$};
\node (w6) at (3cm, 4.5cm)        [draw=none] {$\bullet$};
\node (w7) at (3cm, 4cm)        [draw=none] {$\bullet$};
\draw [thick, grey] (1cm,6cm) -- (2cm,6cm);
\draw [thick] (2cm,6cm) -- (3cm,6cm) -- (3cm,5.5cm) -- (3.5cm,5.5cm) -- (3.5cm,4.5cm);
\draw [thick] (1.5cm,5.5cm) -- (2cm,5.5cm) -- (2cm,5cm) -- (3cm,5cm) -- (3cm,4cm);
\draw [thick, grey]  (3.5cm,4.5cm) -- (3.5cm,3.5cm);
\node at (6.65, 4.8)
{\young(\g\g\x\x\x\hfil,:\x\x\hfil\x\x,::\x\x\x\x,:::\hfil\x\x,::::\x\g,:::::\g)};
\end{tikzpicture}
\caption{a $2$-triangulation with corresponding filling of the
  staircase $\lambda_0$ and a fan of two Dyck paths with
  corresponding filling of the reverse staircase $\lambda_0^{rev}$.}
   \label{fig:triangulation-Dyck}
\end{figure}

\begin{rmk}
  For the staircase shape $\lambda_0$ with $n-1$ rows the set $\Set
  F_{01}^{ne}(\lambda_0, k)$ has a particularly beautiful
  interpretation, namely as the set of $k$-triangulations of the
  $n$-gon.  More precisely, label the vertices of the $n$-gon
  clockwise from $1$ to $n$, and identify a cell of the shape in row
  $i$ and column $j$ with the pair $(n-i+1, j)$ of vertices.  Thus,
  the entries in the filling equal to $1$ define a set of diagonals
  of the $n$-gon.  It is not hard to check that a north-east chain of
  length $k$ in the filling corresponds to a set of $k$ mutually
  crossing diagonals in the $n$-gon.

  Maximal fillings of the reverse staircase shape $\lambda_0^{rev}$
  for a given $k$ are in bijection with fans of $k$ Dyck paths.  An
  illustration of both correspondences is given in
  Figure~\ref{fig:triangulation-Dyck}.  These correspondences were
  Jakob Jonsson's~\cite{Jonsson2005} starting point to prove (in a
  quite non-bijective fashion) that there are as many
  $k$-triangulations of the $n$-gon as fans of $k$ non-intersecting
  Dyck paths with $n-2k$ up steps each.  Luis Serrano and Christian
  Stump~\cite{SerranoStump2010} provided the first completely
  bijective proof of this fact, which we generalise in
  Section~\ref{sec:Edelman-Greene}.  Remarkably, Alex
  Woo~\cite{Woo2004} used the same methods already much earlier for
  the case of triangulations and Dyck paths, {\it i.e.}, $k=1$.
\end{rmk}

\subsection{Pipe dreams}
In this section we collect some results around pipe dreams and
rc-graphs.  All of these statements can be found in~\cite{MR1281474}
together with precise references.
\begin{figure}
  \centering
  \begin{tikzpicture}
    \tpipedream{0.475}{(1.95, 0.6875)}{%
      0/0/black/black,1/0/black/black,3/0/black/black,4/0/black/black,5/0/black/black,6/0/black/white,%
      0/1/black/black,1/1/black/black,4/1/black/black,5/1/black/white,%
      1/2/black/black,3/2/black/black,4/2/black/white,%
      0/3/black/black,3/3/black/white,%
      0/4/black/black,1/4/black/black,2/4/black/white,%
      0/5/black/black,1/5/black/white,%
      0/6/black/white%
    }%
    \cpipedream{0.475}{(1.95, 0.6875)}{%
      2/0/black/black,2/1/black/black,3/1/black/black,0/2/black/black,%
      2/2/black/black,1/3/black/black,2/3/black/black}%
    \content{0.475}{(1.95, 1.6375)}{%
      0/0/$1$,1/0/$2$,2/0/$3$,3/0/$4$,4/0/$5$,5/0/$6$,6/0/$7$,
      -1/1/$1$,-1/2/$2$,-1/3/$6$,-1/4/$4$,-1/5/$7$,-1/6/$5$,-1/7/$3$}%
    \content{0.475}{(5.95, 1.1625)}{%
      0/0/\x,1/0/\x,3/0/\x,4/0/\x,5/0/\x,6/0/\x,%
      0/1/\x,1/1/\x,4/1/\x,5/1/\x,%
      1/2/\x,3/2/\x,4/2/\x,%
      0/3/\x,3/3/\x,%
      0/4/\x,1/4/\x,2/4/\x,%
      0/5/\x,1/5/\x,%
      0/6/\x,%
      2/0/+,2/1/+,3/1/+,0/2/+,%
      2/2/+,1/3/+,2/3/+}%
    \content{0.475}{(5.95, 1.6375)}{%
      0/0/$1$,1/0/$2$,2/0/$3$,3/0/$4$,4/0/$5$,5/0/$6$,6/0/$7$,
      -1/1/$1$,-1/2/$2$,-1/3/$6$,-1/4/$4$,-1/5/$7$,-1/6/$5$,-1/7/$3$}%
    \content{0.475}{(9.95, 1.1625)}{%
      0/0/\x,1/0/\x,3/0/\x,4/0/\x,5/0/\x,6/0/\x,%
      0/1/\x,1/1/\x,4/1/\x,5/1/\x,%
      1/2/\x,3/2/\x,4/2/\x,%
      0/3/\x,3/3/\x,%
      0/4/\x,1/4/\x,2/4/\x,%
      0/5/\x,1/5/\x,%
      0/6/\x,%
      2/0/3,2/1/4,3/1/5,0/2/3,%
      2/2/5,1/3/5,2/3/6}%
    \content{0.475}{(9.95, 1.6375)}{%
      0/0/$1$,1/0/$2$,2/0/$3$,3/0/$4$,4/0/$5$,5/0/$6$,6/0/$7$,
      -1/1/$1$,-1/2/$2$,-1/3/$6$,-1/4/$4$,-1/5/$7$,-1/6/$5$,-1/7/$3$}
\end{tikzpicture}
\caption{the reduced pipe dream associated to the reduced
  factorisation $s_3 s_5 s_4 s_5 s_3 s_6 s_5$ of $1,2,6,4,7,5,3$.}
  \label{fig:dreams}
\end{figure}
\begin{dfn}\label{dfn:pipe} 
  A \Dfn{pipe dream} for a permutation $w$ is a filling of a the
  quarter plane $\naturals^2$, regarding each element of
  $\naturals^2$ as a cell, with \Dfn{elbow joints} $\textelbow$ and a
  finite number of \Dfn{crosses} $\textcross$, such that a pipe
  entering from above in column $i$ exits to the left from row
  $w^{-1}(i)$.  A pipe dream is \Dfn{reduced} if each pair of pipes
  crosses at most once, it is then also called \Dfn{rc-graph}.
  $\Set{RC}(w)$ is the set of reduced pipe dreams for $w$, and, for a
  vector $\Mat r$ of integers, $\Set{RC}(w, \Mat r)$ is the subset of
  $\Set{RC}(w)$ having precisely $\Mat r_i$ crosses in row $i$.
\end{dfn}
  Usually it will be more convenient to draw dots instead of elbow
  joints and sometimes to omit crosses.  We will do so without
  further notice.

\begin{rmk}
  We can associate a reduced factorisation of $w$ to any pipe dream
  in $\Set{RC}(w)$ as follows: replace each cross appearing in row
  $i$ and column $j$ of the pipe dream with the elementary
  transposition $(i+j-1, i+j)$.  Then the reduced factorisation of
  $w$ is given by the sequence of transpositions obtained by reading
  each row of the pipe dream from right to left, and the rows from
  top to bottom.  An example can be found in Figure~\ref{fig:dreams},
  where we write $s_i$ for the elementary transposition $(i,i+1)$.
\end{rmk}

\begin{rmk}\label{rmk:Schubert}
  Using reduced pipe dreams, it is possible to define the Schubert
  polynomial $\mathfrak S_w$ for the permutation $w$ in a very
  concrete way.  For a reduced pipe dream $D\in\Set{RC}(w)$, define
  $x^D=\prod_{(i,j)\in D} x_i$, where the product runs over all
  crosses in the pipe dream.  Then the Schubert polynomial is just
  the generating function for pipe dreams:
  \begin{equation*}
    \mathfrak S_w = \sum_{D\in\Set{RC}(w)} x^D.
  \end{equation*}
  This definition of Schubert polynomials and their evaluation by
  Sergey Fomin and Anatol Kirillov~\cite{MR1471891} was used by
  Christian Stump~\cite{Stump2010} to give a simple proof of the
  product formula for the number of $k$-triangulations of the $n$-gon
  \begin{equation*}
    \prod_{1\leq i,j<n-2k} \frac{i+j+2k}{i+j}.
  \end{equation*}
\end{rmk}

We now define an operation on pipe dreams which was introduced in a
slightly less general form by Nantel Bergeron and Sara
Billey~\cite{MR1281474}.  It will be the main tool in the proof of
Theorem~\ref{thm:filling-dream}.
\begin{dfn}
  Let $D\in\Set{RC}(w)$ be a pipe dream.  Then a \Dfn{chute move} is
  a modification of $D$ of the following form:
  \begin{equation*}
  \begin{array}{@{}c@{}}\\[-5ex]
    \begin{array}{@{}r|c|c|c|c|c|l@{}}
      \multicolumn{5}{c}{}&\multicolumn{1}{c}{
        \phantom{+}}&
      \multicolumn{1}{c}{\begin{array}{@{}c@{}}\\\adots\end{array}}
      \\\cline{2-6}
      &\x&+&\cdots&+&+\\\cline{2-3}\cline{5-6}
      &+ &+&\cdots&+&+\\\cline{2-3}\cline{5-6}
      &\multicolumn{5}{c|}{\vdots\hfill\vdots\hfill\vdots\hfill}&\\\cline{2-3}\cline{5-6}
      &+ &+&\cdots&+&+\\\cline{2-3}\cline{5-6}
      &\x&+&\cdots&+&\x\\\cline{2-6}
      \multicolumn{1}{c}{\begin{array}{@{}c@{}}\adots\\ \\ \end{array}}&
      \multicolumn{1}{c}{\phantom{+}}
    \end{array}
    \quad\stackrel{\text{chute}}\rightsquigarrow\quad
    \begin{array}{@{}r|c|c|c|c|c|l@{}}
      \multicolumn{5}{c}{}&\multicolumn{1}{c}{
        \phantom{+}}&
      \multicolumn{1}{c}{\begin{array}{@{}c@{}}\\\adots\end{array}}
      \\\cline{2-6}
      &\x&+&\cdots&+&\x\\\cline{2-3}\cline{5-6}
      &+ &+&\cdots&+&+\\\cline{2-3}\cline{5-6}
      &\multicolumn{5}{c|}{\vdots\hfill\vdots\hfill\vdots\hfill}&\\\cline{2-3}\cline{5-6}
      &+ &+&\cdots&+&+\\\cline{2-3}\cline{5-6}
      &+ &+&\cdots&+&\x\\\cline{2-6}
      \multicolumn{1}{c}{\begin{array}{@{}c@{}}\adots\\ \\ \end{array}}&
      \multicolumn{1}{c}{\phantom{+}}
    \end{array}
    \\[-3ex]
  \end{array}
  \end{equation*}
  More formally, a \Dfn{chutable rectangle} is a rectangular region
  $r$ inside a pipe dream $D$ with at least two columns and two rows
  such that all but the following three locations of $r$ are crosses:
  the north-west, south-west, and south-east corners.  Applying a
  \Dfn{chute move} to $D$ is accomplished by placing a \textcross\ in
  the south-west corner of a chutable rectangle $r$ and removing the
  \textcross\ from the north-east corner of $r$.  We call the inverse
  operation \Dfn{inverse chute move}.
\end{dfn}

The following lemma was given by Nantel Bergeron and Sara
Billey~\cite[Lemma~3.5]{MR1281474} for two rowed chute moves, the
proof is valid for our generalised chute moves without modification:
\begin{lem}\label{lem:chute-closure}%
  The set $\Set{RC}(w)$ of reduced pipe dreams for~$w$ is closed
  under chute moves.
\end{lem}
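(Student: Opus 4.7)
The plan is to show that a chute move preserves both (i) the permutation $w$ realised by the pipe dream and (ii) the total number of crosses.  Since the number of crosses in any pipe dream for $w$ is bounded below by the Coxeter length $\ell(w)$, with equality precisely when the pipe dream is reduced, these two preservations together will force a chute move applied to a pipe dream in $\Set{RC}(w)$ to land back in $\Set{RC}(w)$.

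To prove (i), I would observe that the chute move only alters cells inside the chutable rectangle, so the induced permutation is unchanged if and only if the pairing of the boundary half-edges of the rectangle by the internal pipe segments is identical before and after the move.  Suppose the rectangle has $p$ rows and $q$ columns, and label the top half-edges $T_1,\dots,T_q$ from left to right, the bottom $B_1,\dots,B_q$ likewise, the left $L_1,\dots,L_p$ from top to bottom, and the right $R_1,\dots,R_p$.  A careful pipe trace through the \emph{before} configuration (elbows at the NW, SW, SE corners and crosses everywhere else, including NE) should yield the pairing
\[
  T_1 \leftrightarrow L_1,\quad T_q \leftrightarrow B_1,\quad L_p \leftrightarrow R_1,\quad B_q \leftrightarrow R_p,
\]
together with $T_c \leftrightarrow B_c$ for $2 \le c \le q-1$ and $L_r \leftrightarrow R_r$ for $2 \le r \le p-1$.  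The same trace in the \emph{after} configuration (elbows at NW, NE, SE and a cross at SW) should then confirm that the pipe at $T_q$ now turns left at the new NE elbow, crosses row $1$ to the left, descends column $1$ (all of whose cells are now crosses) and exits at $B_1$; while the pipe at $L_p$ runs right along row $p$, turns up at the SE elbow, ascends column $q$, and exits via the new NE elbow at $R_1$.  The remaining four pairings survive unchanged because the cells they traverse are untouched by the move.

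With the pairing verified to be identical in the two configurations, the permutation is preserved, and the cross count is obviously preserved as well (one cross removed at NE, one inserted at SW).  Combined with the assumption that the input pipe dream is reduced---so that its cross count already equals $\ell(w)$---this shows that the output is a pipe dream for $w$ with precisely $\ell(w)$ crosses, and hence lies in $\Set{RC}(w)$.  The main bookkeeping obstacle is to check, in the \emph{after} configuration, that every intermediate cell along the long $T_q \leftrightarrow B_1$ and $L_p \leftrightarrow R_1$ detours has exactly the type needed for the pipe to pass through in the intended direction; this reduces to confirming that the two detours avoid the three remaining elbow cells, which is immediate from the explicit description of the modified rectangle.
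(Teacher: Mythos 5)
Your proof is correct and follows essentially the same route as the paper's: the paper's one-line argument is exactly that tracing the pipes through the chutable rectangle (which you carry out explicitly via the boundary pairing $T_1\leftrightarrow L_1$, $T_q\leftrightarrow B_1$, $L_p\leftrightarrow R_1$, $B_q\leftrightarrow R_p$, $T_c\leftrightarrow B_c$, $L_r\leftrightarrow R_r$, verified to be the same before and after the move) shows the associated permutation is unchanged, and your additional step deducing reducedness from the preserved cross count together with the standard bound $\#\{\text{crosses}\}\geq\ell(w)$, with equality exactly in the reduced case, is a fact from Bergeron--Billey that the paper leaves implicit in its pictorial description. One harmless slip in your closing remark: the two rerouted pipes do not avoid the remaining elbow cells but turn at them (the $T_q\leftrightarrow B_1$ pipe turns at the NE and NW elbows, the $L_p\leftrightarrow R_1$ pipe at the SE and NE elbows); this does not affect the argument, since your explicit traces earlier in the paragraph are the correct ones.
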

\begin{proof}
  The pictorial description of chute moves in terms of pipes
  immediately shows that the permutation associated to the pipe dream
  remains unchanged.  For example, here is the picture associated
  with a three rowed chute move:
  \begin{equation*}
    \begin{tikzpicture}[scale=0.88]
      \tpipedream{0.5}{(0,0)}{%
        0/0/black/black,%
        0/2/black/black,7/2/black/black}%
      \cpipedream{0.5}{(0,0)}{%
        1/0/grey/grey,2/0/grey/grey,3/0/grey/grey,4/0/grey/grey,5/0/grey/grey,6/0/grey/grey,7/0/grey/grey,%
        0/1/grey/grey,1/1/grey/grey,2/1/grey/grey,3/1/grey/grey,4/1/grey/grey,5/1/grey/grey,6/1/grey/grey,7/1/grey/grey,%
        1/2/grey/grey,2/2/grey/grey,3/2/grey/grey,4/2/grey/grey,5/2/grey/grey,6/2/grey/grey}%
    \end{tikzpicture}
    \quad\raisebox{0.5cm}{$\stackrel{\text{chute}}\rightsquigarrow$}\quad
    \begin{tikzpicture}[scale=0.88]
      \tpipedream{0.5}{(0,0)}{%
        0/0/black/black,7/0/black/black,%
        7/2/black/black}%
      \cpipedream{0.5}{(0,0)}{%
        1/0/grey/grey,2/0/grey/grey,3/0/grey/grey,4/0/grey/grey,5/0/grey/grey,6/0/grey/grey,%
        0/1/grey/grey,1/1/grey/grey,2/1/grey/grey,3/1/grey/grey,4/1/grey/grey,5/1/grey/grey,6/1/grey/grey,7/1/grey/grey,%
        0/2/grey/grey,1/2/grey/grey,2/2/grey/grey,3/2/grey/grey,4/2/grey/grey,5/2/grey/grey,6/2/grey/grey}%
    \end{tikzpicture}
  \end{equation*}
\end{proof}
\begin{rmk}
  It follows that chute moves define a partial order on $\Set{RC}(w)$,
  where $D$ is covered by $E$ if there is a chute move transforming
  $E$ into $D$.  Nantel Bergeron and Sara Billey restricted their
  attention to two rowed chute moves.  For this case, their main
  theorem states that the poset defined by chute moves has a unique
  maximal element, namely
  $$
  D_{top}(w)=\left\{(c,j): %
    c\leq \#\{i: i < w^{-1}_j, w_i>j\}\right\}.
  $$
  It is easy to see that considering general chute moves, the poset
  has also a unique minimal element, namely
  $$
  D_{bot}(w)=\left\{(i,c): %
    c\leq \#\{j: j > i, w_j < w_i\}\right\}.
  $$
  In the next section we will show a statement similar in spirit to
  the main theorem of Nantel Bergeron and Sara Billey for the more
  general chute moves defined above.
\end{rmk}

\begin{figure}
\begin{center}
\small
\setlength{\arraycolsep}{0.6ex}
\def\lr#1{\multicolumn{1}{|c|}{\raisebox{-.3ex}{$#1$}}}
\def\lrg#1{\multicolumn{1}{|c|}{\raisebox{-.3ex}{\cellcolor[gray]{0.7}$#1$}}}
\def\csix{\hhline{------}}
\def\cfiv{\hhline{-----}}
\def\cfou{\hhline{----}}
\def\cthr{\hhline{---}}
\def\ctwo{\hhline{--}}
\def\cone{\hhline{-}}
\def\thick{}
\scalebox{0.4}{
}
\end{center}
\caption{the poset of reduced pipe dreams for the permutation $1, 2,
  6, 4, 5, 3$.  The interval of $0$-$1$-fillings with $k=1$ of the
  moon polyomino \protect\moon\ is emphasised.}
  \label{fig:poset}
\end{figure}

After generating and analysing some of these posets using
\texttt{Sage}~\cite{Sage-Combinat}, see Figure~\ref{fig:poset} for an
example, we became convinced that they should have much more
structure:
\begin{cnj}\label{cnj:lattice}
  The poset of reduced pipe dreams defined by (general) chute moves
  is in fact a lattice.
\end{cnj}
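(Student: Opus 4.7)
The plan is to embed $\Set{RC}(w)$ order-preservingly into a lattice of integer-valued functions on $\naturals^2$ and to realise meets and joins by pointwise operations, in the spirit of the encoding of alternating sign matrices by their corner-sum matrices. For each pipe dream $D\in\Set{RC}(w)$ I would introduce the \emph{profile function}
\begin{equation*}
  h_D(i,j)=\#\{(a,b)\in D:a\le i\text{ and }b\ge j\},
\end{equation*}
counting crosses of $D$ in the north-east quadrant anchored at $(i,j)$. An inverse chute move, which transports a cross from $(r_2,c_1)$ to $(r_1,c_2)$ with $r_1<r_2$ and $c_1<c_2$, only increases $h$ pointwise: a case analysis on the horizontal strips $i<r_1$, $r_1\le i<r_2$, $i\ge r_2$ shows that $h_E-h_D$ equals $1$ exactly on the cells $(i,j)$ with $i\ge r_1$, $j\le c_2$, and either $i<r_2$ or $j>c_1$, and vanishes elsewhere. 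Hence $D\le E$ implies $h_D\le h_E$ coordinatewise; moreover, $h_D$ determines $D$ by inversion of mixed differences, so the map $D\mapsto h_D$ is injective.

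The crux is to establish the converse: if $h_D\le h_E$ pointwise, then $D$ is reachable from $E$ by chute moves. I would induct on $\Delta(D,E)=\sum_{i,j}(h_E(i,j)-h_D(i,j))$, showing that whenever $D\ne E$ there is a chutable rectangle in $E$ whose chute move both preserves the inequality and strictly decreases $\Delta$. The rectangle should be located from the pattern of positive discrepancies: pick a cross $(r_1,c_2)\in E\setminus D$ supported on a cell with $h_E(i,j)>h_D(i,j)$ and, using $|E|=|D|=\ell(w)$, identify a compensating cross $(r_2,c_1)\in D\setminus E$ with $r_1<r_2$ and $c_1<c_2$; one then verifies, using that $D$ and $E$ represent the same permutation, that the intervening cells of the resulting rectangle are crosses of $E$ as required by the chute rule.

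Once this equivalence is in place, the lattice operations are forced by
\begin{equation*}
  h_{D\wedge E}=\min(h_D,h_E),\qquad h_{D\vee E}=\max(h_D,h_E),
\end{equation*}
and it remains to show that these pointwise operations produce profile functions of pipe dreams in $\Set{RC}(w)$. I would address this by characterising the image of the profile map through a list of local inequalities on first differences and a boundary condition encoding the permutation $w$, a set of conditions that is manifestly closed under pointwise $\min$ and $\max$.

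The principal obstacle is the converse direction: chute moves are rigid local operations, and it is not a priori clear that every pointwise decrease of the profile can be realised by such a move while keeping the intermediate pipe dream reduced. If the inductive construction above proves too delicate, a natural fallback is to translate the problem into the language of maximal $0$-$1$-fillings via Theorem~\ref{thm:filling-dream} and argue in the filling model, where chain-preserving modifications may be controlled by Edelman--Greene or Robinson--Schensted type arguments. Characterising the image of the profile map is expected to be comparatively routine once the order-theoretic equivalence is established.
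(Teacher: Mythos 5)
You are addressing Conjecture~\ref{cnj:lattice}, which the paper does not prove: it is stated as a conjecture, supported only by computer experiments with \texttt{Sage}, and the paper even records (in the remark after Theorem~\ref{thm:ne-se}) that the most obvious route via Edelman--Greene fails to be order-preserving. So your text has to stand entirely on its own, and as written it is a programme rather than a proof. The part you actually carry out is correct: the effect of an (inverse) chute move on $h_D$ is exactly as you compute, so $D\mapsto h_D$ is an injective, order-preserving map from the chute poset on $\Set{RC}(w)$ into integer-valued functions with the pointwise order.

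Both steps that would actually establish the conjecture are missing. For the converse direction you must, given $h_D\le h_E$ with $D\ne E$, exhibit a \emph{chutable} rectangle in $E$; by definition this requires every cell of the rectangle other than its north-west, south-west and south-east corners to be a cross, and nothing in your choice of a discrepancy cross $(r_1,c_2)\in E\setminus D$ and a compensating cross $(r_2,c_1)\in D\setminus E$ guarantees this, nor that the resulting chute keeps the profile weakly above $h_D$. Note that even in the far more constrained setting of the interval $\Set F_{01}^{ne}(M,k)$, the paper needs the delicate chain of Lemmas~\ref{lem:chain-induction}--\ref{lem:two-column-chute} and the case analysis of Theorem~\ref{thm:maximal-fillings-chutable} merely to locate one chutable rectangle, so this step cannot be waved through, and it is not even clear that the chute order coincides with the pointwise order on profiles. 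More seriously, even granting that equivalence, an order-embedded subposet of a lattice need not be a lattice, so the whole conjecture would still rest on your final claim that the image of the profile map is closed under pointwise $\min$ and $\max$. You assert that the image is cut out by local inequalities on first differences plus a boundary condition that is \lq\lq manifestly\rq\rq\ closed under $\min$ and $\max$, but you exhibit no such characterisation; reducedness together with the requirement that all dreams represent the same $w$ is a global constraint, and whether the pointwise $\min$ or $\max$ of two pipe-dream profiles is again a pipe-dream profile for $w$ is essentially the conjecture itself in disguise. Finally, the fallback you mention is blocked as stated: Theorem~\ref{thm:filling-dream} identifies only the intervals $\Set F_{01}^{ne}(M,k)$ inside $\Set{RC}\big(w(M,k)\big)$, not the full poset $\Set{RC}(w)$ for arbitrary $w$, and the paper explicitly observes that the Edelman--Greene correspondence is not order-preserving for the componentwise order on tableaux, already for Ferrers shapes.
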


There is another natural way to transform one reduced pipe dream into
another, originating in the concept of flipping a diagonal of a
triangulation.  Namely, consider an elbow joint in the pipe dream.
Since any pair of pipes crosses at most once, there is at most one
location where the pipes originating from the given elbow joint
cross.  If there is such a crossing, replace the elbow joint by a
cross and the cross by an elbow joint.  Clearly, the result is again
a reduced pipe dream, associated to the same permutation.

It is believed (see Vincent Pilaud and Michel
Pocchiola~\cite{PilaudPocchiola2009}, Question~51) that the
simplicial complex of multitriangulations can be realised as a
polytope, in this case the graph of flips would be the graph of the
polytope.  Note that the graph of chute moves is a subgraph of the
graph of flips.  Is Conjecture~\ref{cnj:lattice} related to the
question of polytopality?

\section{Maximal Fillings of Moon Polyominoes and Pipe Dreams}
\label{sec:maximal-fillings-rc}

Consider a maximal filling in $\Set F_{01}^{ne}(M, k)$.  Recall that
we regard a moon polyomino $M$ as a finite subset of $\naturals^2$.
Also, recall that a pipe dream is nothing but a filling of
$\naturals^2$ with elbow joints and a finite number of crosses.
Thus, replacing zeros in the filling of the moon polyomino with
crosses, and all cells in the filling containing ones as well as all
cells not in $M$ with elbow joints, we clearly obtain a pipe dream
for some permutation $w$.  An example of this transformation is given
in Figure~\ref{fig:pipe-dream-filling}.  We will see in this section
that the pipe dreams obtained in this way are in fact reduced.

\begin{figure}
  \centering
    \begin{tikzpicture}
    \node at (0,0)
    {\young(:\x\hfil,\x\x\hfil\hfil,\hfil\x\hfil\x,:\hfil\hfil\x,:\x\x)};
    \node at (2.8975,-0.005)
    {\young(:\hfil\hfil,\hfil\hfil\hfil\hfil,\hfil\hfil\hfil\hfil,:\hfil\hfil\hfil,:\hfil\hfil)};
    \content{0.475}{(1.95, 1.1625)}{%
      0/0/\x,1/0/\x,3/0/\x,4/0/\x,5/0/\x,6/0/\x,%
      0/1/\x,1/1/\x,4/1/\x,5/1/\x,%
      1/2/\x,3/2/\x,4/2/\x,%
      0/3/\x,3/3/\x,%
      0/4/\x,1/4/\x,2/4/\x,%
      0/5/\x,1/5/\x,%
      0/6/\x,%
      2/0/+,2/1/+,3/1/+,0/2/+,%
      2/2/+,1/3/+,2/3/+}%
    \node at (6.8975,-0.005)
    {\young(:\hfil\hfil,\hfil\hfil\hfil\hfil,\hfil\hfil\hfil\hfil,:\hfil\hfil\hfil,:\hfil\hfil)};
    \tpipedream{0.475}{(5.95, 0.6875)}{%
      0/0/black/black,1/0/black/black,3/0/black/black,4/0/black/black,5/0/black/black,6/0/black/white,%
      0/1/black/black,1/1/black/black,4/1/black/black,5/1/black/white,%
      1/2/black/black,3/2/black/black,4/2/black/white,%
      0/3/black/black,3/3/black/white,%
      0/4/black/black,1/4/black/black,2/4/black/white,%
      0/5/black/black,1/5/black/white,%
      0/6/black/white%
    }%
    \cpipedream{0.475}{(5.95, 0.6875)}{%
      2/0/black/black,2/1/black/black,3/1/black/black,0/2/black/black,%
      2/2/black/black,1/3/black/black,2/3/black/black}%
    \content{0.475}{(5.95, 1.6375)}{%
      0/0/$1$,1/0/$2$,2/0/$3$,3/0/$4$,4/0/$5$,5/0/$6$,6/0/$7$,
      -1/1/$1$,-1/2/$2$,-1/3/$6$,-1/4/$4$,-1/5/$7$,-1/6/$5$,-1/7/$3$}%
  \end{tikzpicture}
  \caption{a maximal filling and the associated pipe dream.}
  \label{fig:pipe-dream-filling}
\end{figure}

One may notice that the permutation associated with the pipe dream so
constructed depends somewhat on the embedding of the polyomino into
the quarter plane.  Although one can check that this dependence is
not substantial for what is to follow, we will assume for simplicity
that the top row and the left-most column of the polyomino have index
$1$ and indices increase from top to bottom and from left to right.

Even without the knowledge that the pipe dream is reduced we can
speak of chute moves applied to fillings in $\Set F_{01}^{ne}(M, k)$.
However, a priori it is not clear under which conditions the result
of such a move is again a filling in $\Set F_{01}^{ne}(M, k)$.  In
particular, we have to deal with the fact that under this
identification all cells outside $M$ are also filled with \emph{elbow
  joints}, corresponding to \emph{ones}.  Of course, to determine the
set of north-east chains we have to consider the original filling and
the boundary of $M$, and \emph{disregard} elbow joints outside.

Similar to the approach of Nantel Bergeron and Sara Billey we will
consider two special fillings $D_{top}(M, k)$ and $D_{bot}(M, k)$.
These will turn out to be the maximal and the minimal element in the
poset having elements $\Set F_{01}^{ne}(M, k)$, where one filling is
smaller than another if it can be obtained by applying chute moves to
the latter.  Figure~\ref{fig:top-bot} displays an example of the
following construction:
\begin{dfn}
  Let $M$ be a moon polyomino and $k\geq0$.  Then $D_{top}(M,
  k)\in\Set F_{01}^{ne}(M, k)$ is obtained by putting ones into all
  cells that can be covered by any rectangle of size at most $k\times
  k$, which is completely contained in the moon polyomino, and that
  touches the boundary of $M$ with its lower-left corner.

  Similarly, $D_{bot}(M, k)\in\Set F_{01}^{ne}(M, k)$ is obtained by
  putting ones into all cells that can be covered by any rectangle of
  size at most $k\times k$, which is completely contained in the moon
  polyomino, and that touches the boundary of $M$ with its
  upper-right corner.
\end{dfn}
\begin{figure}[h]
  \begin{equation*}
    \young(:::\x\x\hfil\hfil,%
    ::\x\x\hfil\hfil\hfil,%
    ::\x\x\hfil\hfil\hfil,%
    \x\x\x\hfil\hfil\hfil\hfil\hfil,%
    \x\x\x\hfil\hfil\hfil\hfil\hfil,%
    :\x\x\x\x\hfil\hfil\x,%
    :\x\x\x\x\x\x\x,%
    :::\x\x\x\x)%
    \quad
    \young(:::\x\x\x\x,%
    ::\x\x\x\x\x,%
    ::\x\hfil\hfil\x\x,%
    \x\x\hfil\hfil\hfil\x\x\x,%
    \x\x\hfil\hfil\hfil\x\x\x,%
    :\hfil\hfil\hfil\hfil\hfil\x\x,%
    :\hfil\hfil\hfil\hfil\hfil\x\x,%
    :::\hfil\hfil\x\x)%
  \end{equation*}
  \caption{The special fillings $D_{top}(M,k)$ and $D_{bot}(M,k)$ for
    $k=2$ of a moon polyomino.}
  \label{fig:top-bot}
\end{figure}

We can now state the main theorem of this article:
\begin{thm}\label{thm:filling-dream}\sloppypar
  Let $M$ be a moon polyomino and $k\geq 0$.  The set $\Set
  F_{01}^{ne}(M, k, \Mat r)$ can be identified with the set of
  reduced pipe dreams $\Set{RC}\big(w(M, k), \Mat r\big)$ having all
  crosses inside of $M$ for some permutation $w(M, k)$ depending only
  on $M$ and $k$: replace zeros with crosses and all cells containing
  ones as well as all cells not in $M$ with elbow joints.

  More precisely, the set $\Set F_{01}^{ne}(M, k)$ is an interval in
  the poset of reduced pipe dreams $\Set{RC}\big(w(M, k)\big)$ with
  maximal element $D_{top}(M, k)$ and minimal element $D_{bot}(M,
  k)$.
\end{thm}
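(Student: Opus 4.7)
The plan is to establish three things in sequence: that the extremal fillings $D_{top}(M,k)$ and $D_{bot}(M,k)$ really lie in $\Set F_{01}^{ne}(M,k)$ and determine a common reduced pipe dream; that a chute move whose rectangle lies inside $M$ preserves $\Set F_{01}^{ne}(M,k)$; and that every filling in $\Set F_{01}^{ne}(M,k)$ is connected to both extremes by chains of such chute moves.

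First I would verify directly that $D_{top}(M,k)$ and $D_{bot}(M,k)$ belong to $\Set F_{01}^{ne}(M,k)$. By construction a cell carrying a zero in $D_{top}(M,k)$ is not covered by any $k\times k$ sub-rectangle of $M$ whose lower-left corner sits on the boundary of $M$; convexity and intersection-freeness of $M$ then force the existence of a length-$k$ north-east chain completed by that cell, while no length-$(k+1)$ chain can be present in the filling. The argument for $D_{bot}(M,k)$ is symmetric. I would then define $w(M,k)$ as the permutation of the pipe dream of $D_{top}(M,k)$ and verify reducedness for this one explicit pipe dream by inspecting its stair-case pattern of crosses.

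The heart of the argument is a chute-move preservation lemma: if $F\in\Set F_{01}^{ne}(M,k)$ and $r$ is a chutable rectangle of the associated pipe dream that sits entirely inside $M$, then the chute move applied to $r$ yields another element of $\Set F_{01}^{ne}(M,k)$. Translated to fillings, the move transfers a one from the south-west corner of $r$ to the north-east corner, while all interior cells of the two relevant rows of $r$ already hold zeros. Using the maximality of $F$ together with the convexity and intersection-freeness of $M$ to control how north-east chains passing through $r$ must rearrange, I would show that the longest chain length stays equal to $k$ and that maximality is preserved. By Lemma~\ref{lem:chute-closure} the associated permutation and reducedness are preserved as well; consequently every filling reached from $D_{top}(M,k)$ by chute moves inside $M$ determines the same permutation $w(M,k)$ and corresponds to a reduced pipe dream.

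The final step is connectivity. Given $F\in\Set F_{01}^{ne}(M,k)$ with $F\neq D_{top}(M,k)$, I would locate the lexicographically first cell of disagreement between $F$ and $D_{top}(M,k)$ and, by inspecting the maximal chain for a suitable nearby empty cell, exhibit a chutable rectangle inside $M$ whose inverse chute move brings $F$ strictly closer to $D_{top}(M,k)$ in a statistic such as the sum of row indices of the ones. A symmetric argument produces chute moves descending from $F$ to $D_{bot}(M,k)$. Combined with the preservation lemma, this identifies $\Set F_{01}^{ne}(M,k)$ with the interval $[D_{bot}(M,k),D_{top}(M,k)]$ in $\Set{RC}(w(M,k))$; the converse containment---that any reduced pipe dream for $w(M,k)$ with all crosses inside $M$ arises from a filling in $\Set F_{01}^{ne}(M,k)$---follows from the same existence-of-chutable-rectangle argument applied to an arbitrary such pipe dream. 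I expect the principal obstacle to be the preservation lemma and its companion existence statement: both require a careful case analysis of how north-east chains can thread a generalised (multi-row) chutable rectangle inside a moon polyomino, and since the chute moves here are not the two-row moves of Bergeron and Billey one cannot import their argument directly but must track an entire anti-diagonal of crosses simultaneously.
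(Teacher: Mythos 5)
Your overall architecture matches the paper's: show the extremal fillings are in $\Set F_{01}^{ne}(M,k)$, show chute moves inside $M$ preserve the set (this is Lemma~\ref{lem:chute-moon-closure}, which is indeed the easy part), and then connect every filling to the extremes by locating chutable rectangles at a cell of disagreement with an extremal filling. However, the step you defer --- ``by inspecting the maximal chain for a suitable nearby empty cell, exhibit a chutable rectangle inside $M$'' --- is precisely the mathematical content of the theorem, and your proposal contains no argument for it. In the paper this is Theorem~\ref{thm:maximal-fillings-chutable}, and it consumes the chain-induction Lemma~\ref{lem:chain-induction} together with Lemmas~\ref{lem:chutable-rectangle}, \ref{lem:chutable-rectangle-2} and \ref{lem:two-column-chute}, plus a delicate analysis whose crux is not ``tracking an anti-diagonal of crosses'' (a generalised chutable rectangle is filled with crosses everywhere except three corners) but proving that the prospective south-west corner of the rectangle is actually a cell of the polyomino and is non-empty. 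That is where maximality, convexity and intersection-freeness interact non-trivially (e.g.\ ruling out a maximal chain for the cell $\delta$ lying strictly north-east of it by comparing the columns through $\sigma$ and through the square $X$), and none of this case analysis is present or even sketched in your plan; the monotone statistic you propose (sum of row indices of the ones) handles termination, which is the trivial part, not existence.

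A second, smaller gap: you claim the converse containment --- that every reduced pipe dream for $w(M,k)$ with all crosses in $M$ comes from a filling in $\Set F_{01}^{ne}(M,k)$ --- ``follows from the same existence-of-chutable-rectangle argument applied to an arbitrary such pipe dream.'' It does not, because that argument presupposes that the filling is maximal: every empty cell must admit a maximal chain, and this is used throughout (already in Lemma~\ref{lem:chain-induction}). For an arbitrary reduced pipe dream contained in $M$ you do not yet know maximality; the paper instead obtains this direction by generating all such pipe dreams from $D_{top}(M,k)$ by chute moves and invoking Lemma~\ref{lem:chute-moon-closure}. Relatedly, you assert reducedness of the pipe dream of an arbitrary maximal filling by transport along chute moves from $D_{top}(M,k)$, which is fine in principle, but the paper is explicit that one must also argue that a non-reduced pipe dream would eventually expose non-maximality of the filling under chute moves; your write-up silently assumes the correspondence is already with \emph{reduced} pipe dreams before connectivity has been established.
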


As already remarked in the introduction various versions of this
theorem were independently proved by various authors by various
methods.  The most general version is due to Luis Serrano and
Christian Stump~\cite[Theorem~2.6]{SerranoStump2010}, whose proof
employs properties of subword complexes and who thus obtain
additionally many interesting properties of the simplicial complex of
$0$-$1$-fillings.

The advantage of our approach using chute moves is the demonstration
of the property that $\Set F_{01}^{ne}(M, k)$ is in fact an interval
in the bigger poset of reduced pipe dreams.  In particular, if
Conjecture~\ref{cnj:lattice} turns out to be true then $\Set
F_{01}^{ne}(M, k)$ is also a lattice.  An illustration is given in
Figure~\ref{fig:poset}.

Let us first state a very basic property of chute moves as applied to
fillings:
\begin{lem}\label{lem:chute-moon-closure}
  Let $M$ be a moon polyomino.  Chute moves and their inverses
  applied to a filling in $\Set F_{01}^{ne}(M, k)$ produce another
  filling in $\Set F_{01}^{ne}(M, k)$ whenever all zero entries
  remain in $M$.
\end{lem}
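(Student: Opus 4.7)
The chute move's effect on the filling of $M$ is simply to move the $1$ from the south-west corner $\mathrm{SW}$ of the chutable rectangle $R$ to the north-east corner $\mathrm{NE}$; every other cell of $M$ retains its value. The hypothesis that all zero entries remain in $M$ forces $\mathrm{SW}\in M$, while $\mathrm{NE}\in M$ already held (it was a zero of the original filling); the cells $\mathrm{NW}$ and $\mathrm{SE}$ may or may not lie in $M$, but since they do not change value, they are irrelevant to the comparison.

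The combinatorial hinge is the observation that no two of the four corners of $R$ lie in NE-chain position with one another: the pairs $\mathrm{NW}/\mathrm{NE}$ and $\mathrm{SW}/\mathrm{SE}$ share a row, the pairs $\mathrm{NW}/\mathrm{SW}$ and $\mathrm{NE}/\mathrm{SE}$ share a column, and $\mathrm{NW}$ is strictly north-west---not north-east---of $\mathrm{SE}$. Together with the fact that the non-corner cells of $R$ are zeros in both fillings, this shows that every NE-chain in either filling visits at most one corner of $R$. Hence chains avoiding $R$ altogether, or using only $\mathrm{NW}$ or $\mathrm{SE}$, appear identically in both fillings; only chains passing through $\mathrm{SW}$ (in the old) or $\mathrm{NE}$ (in the new) can differ.

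To compare these I would construct a length-preserving correspondence. Given an NE-chain $(\dots, e_{s-1}, \mathrm{SW}, e_{s+1}, \dots)$ in the old filling, the successor $e_{s+1}$ must lie outside $R$ in exactly one of three regions---strictly north-east of $R$, east of $R$ within $R$'s row-range, or north of $R$ within $R$'s column-range---and accordingly I replace $\mathrm{SW}$ by $\mathrm{NE}$, $\mathrm{SE}$, or $\mathrm{NW}$. The predecessor constraint is automatic (any cell south-west of $\mathrm{SW}$ is south-west of each of these three corners), and the successor constraint at the substitute is checked directly in each case. Crucially, the substitute corner lies inside the bounding rectangle of the pair $(\mathrm{SW}, e_{s+1})$, which sits inside the chain's bounding rectangle and therefore inside $M$; thus the substitute is itself in $M$, and the new chain's bounding rectangle remains in $M$. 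Chains beginning or ending at $\mathrm{SW}$ are handled by the analogous argument using the predecessor. The symmetric construction maps chains through $\mathrm{NE}$ in the new filling back to chains of the same length in the old filling, so the longest NE-chain has length $k$ in both.

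For maximality, let $\epsilon$ be an empty cell of the new filling. If $\epsilon\neq\mathrm{SW}$ then $\epsilon$ is empty in the old filling and the chain witnessing old maximality at $\epsilon$ transfers to the new filling via the correspondence above. If $\epsilon=\mathrm{SW}$, apply old maximality to $\mathrm{NE}$ to obtain a $k$-chain $\tilde C$ with $\tilde C\cup\{\mathrm{NE}\}$ an NE-chain of length $k+1$. This $\tilde C$ must use $\mathrm{SW}$---else $\tilde C\cup\{\mathrm{NE}\}$ would already be a $(k+1)$-chain in the new filling, contradicting the chain-length bound just established---and then $\tilde C\setminus\{\mathrm{SW}\}\cup\{\mathrm{NE}\}$ is a $k$-chain in the new filling whose extension by $\mathrm{SW}$ reproduces $\tilde C\cup\{\mathrm{NE}\}$, the required $(k+1)$-chain. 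The main obstacle is the case-by-case verification of the substitution map, particularly the claim that the substitute corner and the enlarged bounding rectangle lie in $M$; this is where the moon-polyomino structure---convexity together with the intersection-freeness of rows and columns---enters essentially, via the principle that subrectangles of rectangles in $M$ are in $M$.
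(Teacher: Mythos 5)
Your overall strategy---localise the change to the two cells $\mathrm{SW}$ and $\mathrm{NE}$, observe that any north-east chain meets at most one corner of the chutable rectangle $R$, and transport chains through the moved corner by substituting another corner of $R$---is sound, and it is essentially the verification the paper dismisses as ``not hard''. However, there is a genuine gap precisely at the delicate point. Your key containment principle (``the substitute corner lies inside the bounding rectangle of the pair $(\mathrm{SW},e_{s+1})$, hence in $M$, and the new chain's bounding rectangle remains in $M$'') is only available when the moved corner has a neighbour on the relevant side. If $\mathrm{SW}$ is the north-east-most element of an old chain, there is no successor, and all three candidate substitutes $\mathrm{NE}$, $\mathrm{NW}$, $\mathrm{SE}$ lie strictly north and/or east of $\mathrm{SW}$, hence \emph{outside} the old chain's bounding rectangle; the bounding rectangle genuinely grows, and neither its containment in $M$ nor even the membership of $\mathrm{NW}$ or $\mathrm{SE}$ in $M$ follows from your subrectangle principle. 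The same defect occurs in the direction you actually need to bound chain lengths from above: a hypothetical $(k+1)$-chain of the new filling whose south-west-most element is $\mathrm{NE}$ is not covered by ``the analogous argument using the predecessor'', because there is no predecessor, and $\mathrm{SW}$, $\mathrm{NW}$, $\mathrm{SE}$ all lie outside the bounding rectangle of $\mathrm{NE}$ and its successor. Ruling out this configuration needs a real use of the moon-polyomino structure: for instance, since the top row of $R$ contains (in $M$) all its cells except possibly $\mathrm{NW}$, and the bottom row all its cells except possibly $\mathrm{SE}$, intersection-freeness of rows forces $\mathrm{NW}\in M$ or $\mathrm{SE}\in M$, so one of them carries a $1$ and is a candidate substitute---but one must still check that the resulting, strictly larger, bounding rectangle lies in $M$, which again requires convexity and intersection-freeness and is not automatic. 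Until this endpoint case is treated, neither the claim that the longest chain of the new filling has length $k$ nor your maximality argument at $\epsilon=\mathrm{SW}$ (which quotes that claim) is complete.

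A smaller slip: you assert that no two corners of $R$ are in north-east position, but $\mathrm{SW}$ and $\mathrm{NE}$ are (the bounding rectangle being $R$ itself); your conclusion that a chain meets at most one corner survives only because these two cells are never simultaneously nonzero---one is a zero of the old filling, the other of the new---so the justification should be phrased that way rather than purely geometrically.
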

\begin{proof}
  We only have to check that chain lengths are preserved, which is
  not hard.
\end{proof}

Most of what remains of this section is devoted to prove that there
is precisely one filling in $\Set F_{01}^{ne}(M, k)$ that does not
admit a chute move such that the result is again in $\Set
F_{01}^{ne}(M, k)$, namely $D_{bot}(M, k)$, and precisely one filling
that does not admit an inverse chute move with the same property,
namely $D_{top}(M, k)$.

Although the strategy itself is actually very simple the details turn
out to be quite delicate.  Thus we split the proof into a few
auxiliary lemmas.  Let us fix $k$, a moon polyomino $M$, and a
maximal filling $D\in\Set F_{01}^{ne}(M, k)$ different from
$D_{bot}(M, k)$.  We will then explicitly locate a chutable
rectangle.  Throughout the proof maximality of the filling will play
a crucial role.  The first lemma is used to show that certain cells
of the polyomino must be empty because otherwise the filling would
contain a chain of length $k+1$:
\begin{lem}[Chain induction]\label{lem:chain-induction}
  Consider a maximal filling of a moon polyomino.  Let $\epsilon$ be
  an empty cell such that all cells below $\epsilon$ in the same
  column are empty too, except possibly those that are below the
  lowest cell of the column left of $\epsilon$.  Assume that for
  \emph{each} of these cells $\delta$ there is a maximal chain for
  $\delta$ strictly north-east of $\delta$.  Then there is a maximal
  chain for $\epsilon$ strictly north-east of $\epsilon$.

  Similarly, let $\epsilon$ be an empty cell such that all cells left
  of $\epsilon$ in the same row are empty too, except possibly those
  that are left of the left-most cell of the row below $\epsilon$.
  Assume that for \emph{each} of these cells $\delta$ there is a
  maximal chain for $\delta$ strictly north-east of $\delta$.  Then
  there is a maximal chain for $\epsilon$ strictly north-east of
  $\epsilon$.
\end{lem}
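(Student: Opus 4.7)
The plan is to argue by contradiction: suppose $\epsilon$ admits no maximal chain strictly northeast of itself, and deduce a $(k+1)$-chain in the filling, violating the assumption that it lies in $\Set{F}_{01}^{ne}(M,k)$. By maximality $\epsilon$ admits some maximal chain $C$; by our hypothesis it must contain at least one cell strictly southwest of $\epsilon$. Let $\alpha=(r_\alpha,j_\alpha)$ be the northeast-most such cell, so $r_\alpha>r$ and $j_\alpha<j$. The rectangle of $\{\alpha,\epsilon\}$ is contained in the rectangle of $C\cup\{\epsilon\}$, hence in $M$. In particular $\delta:=(r_\alpha,j)$ and $(r_\alpha,j-1)$ both lie in $M$, so column $j-1$ reaches row $r_\alpha$, i.e.\ $r_\alpha$ is weakly above the bottom of column $j-1$. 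Hence $\delta$ is one of the cells governed by the hypothesis, is empty, and admits a maximal chain $C_\delta$ strictly northeast of $\delta$.

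I would then combine $\alpha$ with $C_\delta$: the set $\{\alpha\}\cup C_\delta$ consists of $k+1$ ones, and $\alpha$ is strictly southwest of every element of $C_\delta$ (since $r_\alpha$ exceeds every row and $j_\alpha<j$ precedes every column in $C_\delta$), so it forms a northeast sequence. If its bounding rectangle $[r_{\mathrm{top}},r_\alpha]\times[j_\alpha,j_{\mathrm{right}}]$, where $r_{\mathrm{top}}$ and $j_{\mathrm{right}}$ denote the top row and right column of $C_\delta$, lies in $M$, this is a genuine $(k+1)$-chain of the filling, contradicting maximality. Three of the four corners lie in $M$ automatically from the rectangles of $C_\delta\cup\{\delta\}$ and of $\{\alpha,\epsilon\}$; only the northwest corner $(r_{\mathrm{top}},j_\alpha)$ requires a separate argument.

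This northwest-corner verification is the main obstacle. The rectangle of $C\cup\{\epsilon\}$ already shows that column $j_\alpha$ extends up to the topmost row of $C$'s northeast part, so if $r_{\mathrm{top}}$ is at least that row, the corner is automatically in $M$ by convexity of column $j_\alpha$. Otherwise I would either replace $\alpha$ with the southwest-most southwest cell of $C$ (whose column extends up further by the same rectangle) and shift $\delta$ accordingly, or split $C_\delta$ at row $r$ — the part of $C_\delta$ strictly above row $r$ is already a northeast chain for $\epsilon$, and I would fill it out to length $k$ using cells of the northeast part of $C$, appealing to intersection-freeness to check the rectangle of the resulting chain. The second, row-version statement of the lemma then follows by reflecting the moon polyomino across its main diagonal and applying the column version just proved.
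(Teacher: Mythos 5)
Your setup is sound and in fact coincides with the paper's: taking $\alpha$ to be the topmost element of $C$ south-west of $\epsilon$ and $\delta=(r_\alpha,j)$ is exactly the paper's choice of $\delta$, your verification that $\delta$ is covered by the hypothesis (via $(r_\alpha,j-1)\in M$) is correct, and your ``good case'' is handled cleanly: when the top row of $C_\delta$ is weakly below the top row of the bounding rectangle of $C\cup\{\epsilon\}$, the four-corner argument does place the rectangle of $\{\alpha\}\cup C_\delta$ inside $M$ and produces a $(k+1)$-chain. The genuine gap is the remaining case, when $C_\delta$ reaches strictly above every element of $C$ (a typical situation, for instance when $C$ lies entirely south-west of $\epsilon$), and this is precisely where the paper's proof does almost all of its work. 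Your Alternative A does not work as stated: replacing $\alpha$ by the south-west-most element of $C$ moves you to a column weakly further left whose upward reach is certified only by the \emph{same} rectangle of $C\cup\{\epsilon\}$, i.e.\ only up to the same top row; in a general moon polyomino that column may stop lower, so nothing is gained for the north-west corner, and the new chain for the shifted $\delta$ may again reach too high.

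Your Alternative B is the right idea but is exactly the missing content rather than a proof. To splice the part of $C_\delta$ above row $r$ with cells of the north-east part of $C$ you must show that the selected cells are pairwise north-east comparable, that they number at least $k$, and that their bounding rectangle together with $\epsilon$ lies in $M$; none of this is automatic, since an element of $C_\delta$ can lie strictly north-west of an element of the north-east part of $C$, in which case the naive union is not a chain. The paper resolves this by first choosing the maximal chain for $\epsilon$ with as many elements north-east of $\epsilon$ as possible, and then proving by induction that the $i$-th north-east element of that chain is strictly north and weakly west of the $i$-th element of $C_\delta$; this domination statement is what allows the two chains to be merged into a $k$-chain north-east of $\epsilon$, contradicting the standing assumption, and it is the heart of the lemma. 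Without that extremal choice and the domination induction (or a substitute for them), your sketch does not close the hard case, so the proof is incomplete. Your reduction of the second statement to the first by transposing the polyomino is fine.
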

\begin{rmk}
  Note that for the conclusion of Lemma~\ref{lem:chain-induction} to
  hold we really have to assume that \emph{all} cells below
  $\epsilon$ are empty: in the maximal filling for $k=1$
  \begin{equation*}
    \young(\x\epsilon\x,%
    \hfil\delta\x,%
    \x\x)
  \end{equation*}
  there is a maximal chain for $\delta$ north-east of
  $\delta$, but no maximal chain for $\epsilon$ north-east of
  $\epsilon$.  The following example demonstrates that it is equally
  necessary that the filling is maximal:
  \begin{equation*}
    \young(\x,\epsilon\x,\hfil\x)
  \end{equation*}
\end{rmk}
\begin{proof}
  Assume on the contrary that there is no maximal chain for
  $\epsilon$ north-east of $\epsilon$.  Consider a maximal chain
  $C_\epsilon$ for $\epsilon$ that has as many elements north-east of
  $\epsilon$ as possible.  Let $\delta$ be the cell in the same
  column as $\epsilon$, below $\epsilon$, in the same row as the top
  entry of $C_\epsilon$ which is south-east of $\epsilon$.  By
  assumption, there is a maximal chain $C_\delta$ for $\delta$
  north-east of $\delta$.  We have to consider two cases:

  If the widest rectangle containing $C_\epsilon$ is not as wide as
  the smallest rectangle containing $C_\delta$, then the entry of
  $C_\epsilon$ to the left of $\delta$ would extend $C_\delta$ to a
  $(k+1)$-chain, which is not allowed:
  \begin{center}
    \setlength{\unitlength}{0.5cm}
    \begin{picture}(12,11)
      \put(2,0){\framebox(8,10){}}         
      \put(10,10){$C_\epsilon$}
      \put(0,3){\framebox(12,5){}}         
      \put(12,8){$C_\delta$}
      \put(5.5,0){\dashbox{0.3}(1,10){}}     
      \put(5.5,3){\framebox(1,1){$\delta$}}
      \put(5.5,6){\framebox(1,1){$\epsilon$}}
      \put(4,3){\makebox(1,1){$\x$}}
      \put(3.5,1.8){\makebox(1,1){$\x$}}
      \put(2.5,1){\makebox(1,1){$\adots$}}
      \put(7,7){\makebox(1,1){$\x$}}
      \put(7.6,8){\makebox(1,1){$\x$}}
      \put(8.5,8.5){\makebox(1,1){$\adots$}}
      \put(7,4){\makebox(1,1){$\x$}}
      \put(8.8,5){\makebox(1,1){$\adots$}}
      \put(11,5.8){\makebox(1,1){$\x$}}
    \end{picture}
  \end{center}

  If the smallest rectangle containing $C_\epsilon$ is at least as
  wide as the widest rectangle containing $C_\delta$, then we obtain
  a maximal chain for $\epsilon$ north-east of $\epsilon$ by
  induction.  Let $c_\epsilon^1, c_\epsilon^2,\dots$ be the sequence
  of elements of $C_\epsilon$ north-east of $\epsilon$, and
  $c_\delta^1, c_\delta^2,\dots$ the sequence of elements of
  $C_\delta$ north-east of $\delta$.  We will show that
  $c_\epsilon^i$ must be strictly north and weakly west of
  $c_\delta^i$, for all $i$.  Thus, the elements $c_\epsilon^1,
  c_\epsilon^2,\dots$ together with the elements of $C_\delta$
  outside the smallest rectangle containing $C_\epsilon$ form a
  maximal chain for $\epsilon$ north-east of $\epsilon$.

  $c_\epsilon^1$ is strictly north of $c_\delta^1$, since otherwise
  $C_\delta$ would be a maximal chain for $\epsilon$.  $c_\epsilon^1$
  cannot be strictly east of $c_\delta^1$, since in this case
  $c_\delta^1$ together with $C_\epsilon$ would be a $(k+1)$-chain.

  Suppose now that $c_\epsilon^{i-1}$ is strictly north and weakly
  west of $c_\delta^{i-1}$.  $c_\delta^i$ cannot be strictly
  north-east of $c_\epsilon^{i-1}$, since this would yield a
  $k$-chain north-east of $\epsilon$.  $c_\delta^i$ must be strictly
  east of $c_\epsilon^{i-1}$, since $c_\delta^i$ is strictly east of
  $c_\delta^{i-1}$, which in turn is weakly east of
  $c_\epsilon^{i-1}$ by the induction hypothesis.  Thus,
  $c_\epsilon^{i-1}$ is weakly north and strictly west of
  $c_\delta^i$.

  $c_\epsilon^i$ cannot be strictly north-east of $c_\delta^i$, since
  then the elements of $C_\epsilon$ south-west of $\epsilon$ together
  with the elements $c_\delta^1,\dots,c_\delta^i$ and
  $c_\epsilon^i,c_\epsilon^{i+1}, \dots$ would form a $(k+1)$-chain.
  Finally, $c_\epsilon^i$ must be strictly north of $c_\delta^i$,
  since $c_\epsilon^i$ is strictly north of $c_\epsilon^{i-1}$, which
  in turn is weakly north of $c_\delta^i$.
\end{proof}

\begin{lem}\label{lem:chutable-rectangle}
  Consider a maximal filling of a moon polyomino.  Suppose that there
  is a rectangle with at least two columns and at least two rows
  completely contained in the polyomino, with all cells empty except
  the north-west, south-east and possibly the south-west corners.
  Then the south-west corner is indeed non-empty, {\it i.e.}, the
  rectangle is chutable.
\end{lem}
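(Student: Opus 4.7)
The plan is to proceed by contradiction: assume the south-west corner of the rectangle is empty, extract from maximality a witnessing $k$-chain $A$ for that corner, and then use either the given $1$ at the north-west corner or the given $1$ at the south-east corner to lengthen $A$ into a genuine north-east chain of length $k+1$ in the filling.

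To carry this out I would label the corners $NW=(r_1,c_1)$, $NE=(r_1,c_2)$, $SW=(r_2,c_1)$, $SE=(r_2,c_2)$ with $r_1<r_2$ and $c_1<c_2$, and write $A=(a_1,\dots,a_k)$ in increasing north-east order. The first observation is that every $a_i$ must lie strictly north-east of $SW$, since $SW$ becomes the south-west-most element of the $(k+1)$-chain $\{SW\}\cup A$. The second and key observation is a structural one: every cell strictly north-east of $SW$ that lies inside the given rectangle (i.e.\ with row in $[r_1,r_2-1]$ and column in $[c_1+1,c_2]$) is empty by hypothesis, and therefore no $a_i$ can occupy such a cell. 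Combined with the previous observation, each $a_i$ lies either strictly above the rectangle ($r<r_1$) or strictly to its right ($c>c_2$).

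Applied to the south-west-most element $a_1$, this dichotomy splits the argument into two symmetric cases. If $a_1$ sits to the right of the rectangle, then monotonicity of rows and columns along $A$ forces every $a_i$ to have column $>c_2$ and row $<r_2$, so each $a_i$ is strictly north-east of $SE$, making $\{SE\}\cup A$ a north-east chain of length $k+1$. If instead $a_1$ sits above the rectangle, the same monotonicity forces every $a_i$ to have row $<r_1$ and column $>c_1$, so each $a_i$ is strictly north-east of $NW$, making $\{NW\}\cup A$ a north-east chain of length $k+1$. In both cases the new bounding rectangle is obtained from the bounding rectangle of $\{SW\}\cup A$ by trimming only rows or only columns (keeping $a_k$ as its north-east corner), hence is a sub-rectangle of a rectangle already known to lie in $M$; by convexity of the moon polyomino it lies in $M$ itself, so the extended chain is a legitimate north-east chain of length $k+1$ in the filling. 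This contradicts membership in $\Set F_{01}^{ne}(M,k)$, and so $SW$ must have been non-empty.

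The main obstacle is pinning down the structural claim that $A$ lives entirely in the ``L''-shaped region above the rectangle together with the strip to its right, because once this dichotomy is available the case analysis and the sub-rectangle containment are essentially routine. This structural claim in turn rests on the simple but essential point that the interior cells of the given rectangle are empty precisely by hypothesis, so the $1$'s at $NW$ and $SE$ are the only non-empty cells close to $SW$ that can be used to lengthen $A$.
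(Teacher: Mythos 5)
Your argument breaks at the very first step. You assert that every element of the maximal chain $A$ for the south-west corner must lie strictly north-east of $SW$, ``since $SW$ becomes the south-west-most element of the $(k+1)$-chain $\{SW\}\cup A$.'' Nothing forces this: by the paper's definition, a maximal chain for an empty cell is any $k$-chain that, together with that cell, would form a $(k+1)$-chain, and its elements may lie on either side of the cell. Indeed, the remainder of your own argument (the dichotomy ``above the rectangle or to its right,'' followed by extension through $NW$ or $SE$) is precisely a correct proof that \emph{no} element of $A$ can lie strictly north-east of $SW$ --- so $A$ must lie entirely strictly south-west of $SW$, and at that point you have produced no contradiction at all. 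The case that actually needs work is exactly the one your first observation waves away.

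The paper closes that case by bringing in the second empty corner: the north-east corner $\epsilon$ also has a maximal chain, which by the symmetric version of your dichotomy must lie entirely strictly north-east of $\epsilon$. One is then left with a $k$-chain hanging off to the south-west of $SW$ and another hanging off to the north-east of $\epsilon$, and the contradiction comes from \emph{intersection-freeness} of the moon polyomino: comparing the bottom row of the first chain's bounding rectangle with the top row of the second's, one of the two containments lets you extend the corresponding chain through the $1$ at $SE$ (respectively $NW$) into a $(k+1)$-chain whose bounding rectangle still lies in $M$. Your proposal never invokes intersection-freeness or convexity beyond a trivial sub-rectangle remark, which is a reliable sign that something is missing: the statement genuinely depends on the polyomino being a moon polyomino, not merely on the local picture inside the given rectangle. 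To repair your proof you would need to add the analysis of the north-east corner and the intersection-freeness argument; the portion you did write can be kept as the proof that the two maximal chains are confined to the south-west and north-east regions respectively.
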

Note that we must insist that the south-west corner of the rectangle
is part of the polyomino.  Here is a maximal filling with $k=1$,
where the three cells in the south-west do not form a chutable
rectangle, since the south-west corner is missing:
\begin{equation*}
  \young(:\x\hfil\x,%
  \x\hfil\hfil\hfil,%
  \x\hfil\hfil\x,%
  :\x\x)%
\end{equation*}
However, we can weaken this assumption in a different way:
\begin{lem}\label{lem:chutable-rectangle-2}
  Consider a maximal filling of a moon polyomino.  Suppose that there
  is a rectangle with at least two columns and at least two rows such
  that all cells of its top row and its right column are contained in
  the polyomino.  Assume furthermore that all cells of the rectangle
  that are in the polyomino are empty except the north-west,
  south-east and possibly the south-west corners.  Finally, suppose
  that there is no maximal chain for the cell in the north-east
  corner strictly north east of it.  Then the cell in the south-west
  corner is indeed in the polyomino and non-empty, {\it i.e.}, the
  rectangle is chutable.
\end{lem}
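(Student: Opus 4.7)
The plan is to reduce the statement to Lemma~\ref{lem:chutable-rectangle}. Concretely, I will show that $SW\in M$; once this is established, intersection-freeness of the moon polyomino forces the whole rectangle into $M$ (both boundary columns $j_1$ and $j_2$ then contain the row range $[i_1,i_2]$, and by the standard inclusion $R_c\supseteq R_{j_1}\cap R_{j_2}$ valid for every intermediate column $c$ of a moon polyomino, so does each intermediate column), whereupon Lemma~\ref{lem:chutable-rectangle} applies directly and yields $SW$ non-empty, proving chutability.

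Suppose for contradiction that $SW\notin M$, i.e., column $j_1$ terminates at some row $b_1$ with $i_1\le b_1<i_2$. I will construct a maximal chain for $NE$ lying strictly northeast of $NE$, contradicting the added hypothesis. The construction iterates Lemma~\ref{lem:chain-induction}. The rectangle's emptiness pattern supplies many empty cells: the entire interior of the rectangle (within $M$) is empty, as are the top row between $NW$ and $NE$ and the right column between $NE$ and $SE$. The hypothesis $SW\notin M$ moreover places $NW$ into the exception region for row-wise chain induction applied to any cell in a row strictly below $b_1$, because then the leftmost cell of the row below lies strictly to the right of column $j_1$. Starting the propagation at a cell for which the premise of chain induction is satisfied---typically vacuously, at the bottommost cell of column $j_2$ whose non-exception cells below list is empty---and pushing the property ``admits a maximal chain strictly northeast'' upward and leftward through the rectangle, one eventually deduces that $NE$ itself admits such a chain.

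The main obstacle is the case split that governs where the propagation can start. A direct application of row-wise chain induction to $NE$ works only when $b_1=i_1$ (so that $NW$ is a row-wise exception), and a direct application of column-wise chain induction only when $b_{j_2-1}<i_2$ (so that $SE$ lies below the bottom of column $j_2-1$ and is thus a column-wise exception); the delicate subcase is $b_1>i_1$ together with $b_{j_2-1}\ge i_2$. Here the plan is to begin the column-wise propagation one step lower, at the empty cell $(b_1,j_2)$: since $R_{j_2-1}\supseteq R_{j_1}=[a_1,b_1]$, the cells of column $j_2$ in rows $(b_1,b_{j_2-1}]$ are all empty and strictly between $NE$ and $SE$, and once $(b_1,j_2)$ has been shown to admit a maximal chain strictly northeast of it---via a row-wise sub-argument that exploits $NW$ being in the exception region for every row in $\{b_1+1,\dots,i_2\}$---the standard upward propagation in column $j_2$ from $(b_1,j_2)$ through the empty cells up to $NE$ completes the contradiction.
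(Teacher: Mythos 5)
Your overall reduction is sound: if the south-west corner lies in $M$, then convexity of rows and columns places the entire rectangle inside $M$ and Lemma~\ref{lem:chutable-rectangle} finishes the argument, so everything hinges on showing that $SW\notin M$ forces a maximal chain for $NE$ strictly north-east of $NE$. The two subcases you call easy do work out with the propagation you describe (when $b_1=i_1$ one walks rightward along the top row with the row version of Lemma~\ref{lem:chain-induction}, when $b_{j_2-1}<i_2$ one walks up column $j_2$ with the column version, the extremal application being vacuous). The gap is exactly in the subcase you yourself single out as delicate, $b_1>i_1$ together with $b_{j_2-1}\ge i_2$, and your proposed repair does not work. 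The column version of Lemma~\ref{lem:chain-induction} applied at any cell $(r,j_2)$ with $r<i_2$ requires \emph{all} cells of column $j_2$ below $(r,j_2)$ that are not below the lowest cell of column $j_2-1$ to be empty; since $b_{j_2-1}\ge i_2$, the cell $SE=(i_2,j_2)$ is not in the exception region and it carries a $1$, so the lemma is inapplicable at \emph{every} cell of column $j_2$ above $SE$ --- in particular at $NE$ and at every step of the proposed upward propagation from $(b_1,j_2)$, not just the last one. The supporting claims in that paragraph are also incorrect as stated: the cells of column $j_2$ in rows $(b_1,b_{j_2-1}]$ are not all empty (this range contains $SE$), and $NW$, which lies in row $i_1$, is never in the exception region of a row-wise application at a cell of a row strictly below $b_1$, because that exception region consists of cells of the same row as the cell being treated. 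Since in this configuration neither version of Lemma~\ref{lem:chain-induction} can be invoked at $NE$ at all, no chain-induction-based propagation can terminate there, and the proof is incomplete precisely where the extra hypothesis of the lemma is supposed to do its work.

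For comparison, the paper prints no proof of this lemma (the proof that follows its statement is explicitly that of Lemma~\ref{lem:chutable-rectangle}); the added hypothesis is tailored to a direct argument with maximal chains rather than to chain induction. Since $NE$ is empty and the filling is maximal, there is a maximal chain $C$ for $NE$, and by hypothesis $C$ contains an element strictly south-west of $NE$; let $c$ be the north-easternmost such element. If $c$ lies in the rectangle, the emptiness hypothesis forces $c=SW$, which is the claim. If $c$ lies weakly west of column $j_1$ and weakly below row $i_2$, the rectangle spanned by $c$ and $NE$ is contained in that of $C\cup\{NE\}$, hence in $M$, and it contains $SW$; so $SW\in M$ and Lemma~\ref{lem:chutable-rectangle} concludes. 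In the remaining positions ($c$ west of column $j_1$ but above row $i_2$, or below row $i_2$ but east of column $j_1$) one checks that $C\cup\{NW\}$, respectively $C\cup\{SE\}$, is a $(k+1)$-chain whose bounding rectangle lies inside that of $C\cup\{NE\}$, a contradiction. An argument of this kind is what is needed to dispose of your delicate subcase.
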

\begin{proof}[Proof of Lemma~\ref{lem:chutable-rectangle}]
  Suppose on the contrary that the cell in the south-west corner is
  empty, too.  Then, the situation is as in the following picture:
  \begin{center}
    \setlength{\unitlength}{0.5cm}
    \begin{picture}(6,4)%
      \put(0,0){\framebox(6,4){}} %
      \put(0,0){\framebox(1,1){$\delta$}} %
      \put(5,0){\framebox(1,1){$\x$}} %
      \put(5,3){\framebox(1,1){$\epsilon$}}%
      \put(0,3){\framebox(1,1){$\x$}} %
    \end{picture}
  \end{center}

  Since the filling is maximal but the cells $\delta$ and $\epsilon$
  are empty, there must be maximal chains for these cells.  The
  corresponding rectangles must not cover any of the two cells
  containing ones, since that would imply the existence of a
  $(k+1)$-chain.  Thus, any maximal chain for $\delta$ must be
  strictly south-west of $\delta$, and any maximal chain for
  $\epsilon$ must be strictly north-east of $\epsilon$.  Since the
  polyomino is intersection free, the top row of the rectangle
  containing the maximal chain for $\epsilon$ is either contained in
  the bottom row of the rectangle containing the maximal chain for
  $\delta$, or vice versa.  In both cases, we have a contradiction.
\end{proof}

The next lemma parallels the main Lemma~3.6 in the article by Nantel
Bergeron and Sara Billey~\cite{MR1281474}:
\begin{lem}\label{lem:two-column-chute}
  Consider a maximal filling of a moon polyomino.  Suppose that there
  is a cell $\gamma$ containing a $1$ with an empty cell $\epsilon$
  in the neighbouring cell to its right, such that there are at least
  as many cells above $\gamma$ as above $\epsilon$.  Then the filling
  contains a chutable rectangle.

  Similarly, suppose that there is a cell $\gamma$ containing a $1$
  with an empty cell $\epsilon$ in the neighbouring cell below it,
  such that there are at least as many cells right of $\gamma$ as
  right of $\epsilon$.  Then the filling contains a chutable
  rectangle.
\end{lem}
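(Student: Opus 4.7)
My plan is to exhibit a chutable rectangle by an iterative search in the region south-east of $\gamma$, recursing on a strictly smaller instance of the lemma's hypothesis whenever a chutable rectangle fails to appear directly. Write $\gamma=(i,j)$ and $\epsilon=(i,j+1)$, and consider the $2\times 2$ block at positions $\gamma$, $\epsilon$, $\gamma^{\downarrow}:=(i+1,j)$, $\epsilon^{\downarrow}:=(i+1,j+1)$. I first argue that $\gamma^{\downarrow}\in M$ whenever $\epsilon^{\downarrow}\in M$: this is forced by the height hypothesis together with the intersection-freeness of $M$, since the set of rows occupied by column $j+1$ is contained in the set of rows occupied by column $j$. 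The situation in which neither of these cells lies in $M$ means $\gamma$ sits in the bottom row of its column; this base case I handle separately using the second (rightward) form of Lemma~\ref{lem:chain-induction} to locate a $1$ strictly north-east of $\epsilon$, and then Lemma~\ref{lem:chutable-rectangle-2} to obtain the chutable rectangle directly.

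Assuming both $\gamma^{\downarrow}$ and $\epsilon^{\downarrow}$ lie in $M$, I proceed by cases on their entries. If $\gamma^{\downarrow}=\epsilon^{\downarrow}=1$, then the $2\times 2$ block with corners $\gamma,\epsilon,\gamma^{\downarrow},\epsilon^{\downarrow}$ is already a chutable rectangle. If $\gamma^{\downarrow}=1$ and $\epsilon^{\downarrow}=0$, then $(\gamma^{\downarrow},\epsilon^{\downarrow})$ is a new instance of the hypothesis one row below, with the height condition preserved automatically (both heights above increase by exactly one), so we recurse. If $\gamma^{\downarrow}=0$, I apply Lemma~\ref{lem:chain-induction} to produce a maximal chain for $\gamma^{\downarrow}$; the placement of $\gamma$ immediately above $\gamma^{\downarrow}$ and the height hypothesis together force such a chain to have an element in a position that can serve as the south-east corner of a larger candidate rectangle, and an analysis of its interior either yields a chutable rectangle immediately, via Lemma~\ref{lem:chutable-rectangle} or Lemma~\ref{lem:chutable-rectangle-2}, or exhibits an interior $1$ adjacent to an empty cell on its right — which is, once more, a new instance of the lemma's hypothesis on a strictly smaller region.

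Termination of the recursion is guaranteed because each recursive call either finds a chutable rectangle or strictly decreases the row index gap between $\gamma$ and the bottom of its column, and $M$ is finite. The symmetric statement about a cell $\gamma$ containing a $1$ with an empty cell $\epsilon$ immediately below it follows by exchanging the roles of rows and columns throughout.

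The main obstacle is the subcase $\gamma^{\downarrow}=0$: I must use chain induction both to certify that the required $1$ appears strictly south-east of $\gamma$ and to verify that the interior of the resulting candidate rectangle contains no obstruction, or else produces a smaller valid instance of the hypothesis. Carrying this out cleanly requires carefully tracking that the height condition $h(\gamma')\geq h(\epsilon')$ continues to hold at each recursive step, which in turn relies on the convexity and intersection-freeness of the moon polyomino.
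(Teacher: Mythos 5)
Your plan breaks down at exactly the point where the lemma is hard. First, the preliminary structural claim is false: intersection-freeness only gives that the columns of $\gamma$ and $\epsilon$ are \emph{comparable}, and together with ``at least as many cells above $\gamma$ as above $\epsilon$'' it does \emph{not} follow that $\epsilon^{\downarrow}\in M$ implies $\gamma^{\downarrow}\in M$. For instance, in the two-column moon polyomino whose left column occupies rows $1,\dots,5$ and whose right column occupies rows $1,\dots,6$, the cells $\gamma=(5,1)$, $\epsilon=(5,2)$ satisfy the height hypothesis, yet $\gamma^{\downarrow}\notin M$ while $\epsilon^{\downarrow}\in M$; this configuration is covered neither by your recursion (which assumes both cells lie in $M$) nor by your base case (which assumes neither does). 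Second, in the decisive subcase $\gamma^{\downarrow}$ empty you do not actually argue anything: Lemma~\ref{lem:chain-induction} does not ``produce'' a maximal chain (maximality of the filling does that); what the lemma gives is the \emph{location} of a chain, and only under hypotheses (all cells below are empty and each of them already has a maximal chain strictly north-east) that you never verify. The subsequent ``analysis of its interior'' that is supposed to yield a chutable rectangle, or a strictly smaller instance with the height hypothesis preserved and a decreasing termination measure, is precisely the content of the lemma and is nowhere carried out. Third, the base case is handled with an inapplicable tool: the rightward form of Lemma~\ref{lem:chain-induction} requires the cells left of $\epsilon$ in its row to be empty, but the cell immediately left of $\epsilon$ is $\gamma$, which contains a $1$; nor do you specify the rectangle to which Lemma~\ref{lem:chutable-rectangle-2} is to be applied. (In fact, when $\epsilon$ is the bottom cell of its column every maximal chain for $\epsilon$ is forced strictly north-east of $\epsilon$, and adjoining $\gamma$ then yields a $(k+1)$-chain, so this configuration simply cannot occur; your argument establishes neither this contradiction nor a chutable rectangle.)

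For comparison, the paper's proof avoids any recursion on $2\times 2$ blocks. It first shows, by contradiction, that not all cells of $\epsilon$'s column lying below $\epsilon$ and weakly above the bottom cell of $\gamma$'s column can be empty: the lowest such cell has a maximal chain forced strictly north-east of it, Lemma~\ref{lem:chain-induction} propagates this up the column to give a maximal chain for $\epsilon$ strictly north-east of $\epsilon$, and since $\gamma$'s column reaches at least as high as $\epsilon$'s, adjoining the $1$ in $\gamma$ produces a forbidden $(k+1)$-chain. Having secured a non-empty cell below $\epsilon$ in that region, it then exhibits an explicit two-column rectangle --- south-east corner the topmost non-empty cell below $\epsilon$, north-west corner the lowest $1$ in $\gamma$'s column strictly above it --- and concludes with Lemma~\ref{lem:chutable-rectangle}. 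If you want to salvage your approach, you would need to replace the vague third case by an argument of this kind; as written, the proposal has a genuine gap there, in addition to the incorrect containment claim and the misapplied base case.
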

\begin{proof}
  Suppose that all of the cells in the column containing $\epsilon$,
  which are below $\epsilon$ and weakly above the bottom cell of the
  column containing $\gamma$, are empty.  Let $\delta$ be the lowest
  cell in this region.  There must then be a maximal chain for
  $\delta$ that is north-east of $\delta$.  By
  Lemma~\ref{lem:chain-induction}, we conclude that there is also a
  maximal chain for $\epsilon$ north-east of $\epsilon$.  However,
  then the $1$ in the cell left of $\epsilon$ together with this
  chain yields a $(k+1)$-chain, since the rectangle containing the
  maximal chain for $\epsilon$ extends by hypothesis to the column
  left of $\epsilon$.

  We can thus apply Lemma~\ref{lem:chutable-rectangle} to the
  following rectangle: the south-east corner being the top non-empty
  cell below $\epsilon$, and the north-west corner being the lowest
  cell containing a $1$ in the column of $\gamma$, strictly above the
  chosen south-east corner.
\end{proof}

Finally, the main statement follows from a careful analysis of
fillings different from $D_{bot}(M, k)$, repeatedly applying the
previous lemmas to exclude obstructions to the existence of a
chutable rectangle:
\begin{thm}\label{thm:maximal-fillings-chutable}
  Any maximal filling other than $D_{bot}(M,k)$ admits a chute move
  such that the result is again a filling of $M$.  Any maximal
  filling other than $D_{top}(M,k)$ admits an inverse chute move such
  that the result is again a filling of $M$.
\end{thm}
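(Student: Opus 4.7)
The plan is to prove the first assertion (existence of a chute move when $D \neq D_{bot}(M,k)$); the second assertion is symmetric, following from reflecting the polyomino through its NE--SW diagonal, which interchanges chute moves with inverse chute moves and $D_{bot}$ with $D_{top}$. Fix a filling $D \in \Set{F}_{01}^{ne}(M,k)$ different from $D_{bot}(M,k)$. Recall that $D_{bot}(M,k)$ places its $1$-cells exactly in the cells covered by some $k\times k$ rectangle contained in $M$ and touching the boundary of $M$ at its upper-right corner, so its $1$-cells are pushed as far SE as possible. The strategy is to locate, using the structural displacement of $1$-cells between $D$ and $D_{bot}(M,k)$, a local configuration to which the chute-rectangle lemmas apply, taking care that the newly placed cross lands inside $M$.

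The central engine is Lemma~\ref{lem:two-column-chute}: a $1$-cell $\gamma$ with an empty cell $\epsilon$ immediately to its right whose column is no taller above $\gamma$ than $\gamma$'s column is (or the vertical analogue) already yields a chutable rectangle whose chute stays in $M$, because the column/row-length hypothesis is exactly what guarantees that the cross placed in the SW corner remains in the polyomino. To force such a configuration to exist, I would examine the NW-most cell $\gamma$ holding a $1$ in $D$ but a $0$ in $D_{bot}(M,k)$. Since $\gamma$ fails to be enclosed by any boundary-anchored $k \times k$ rectangle, some direction away from $\gamma$ must leave this rectangular coverage; combined with the count equality of $0$-cells in $D$ and $D_{bot}(M,k)$ (both are identified with rc-graphs for the same permutation $w(M,k)$ by the first part of Theorem~\ref{thm:filling-dream}), this forces an empty neighbour satisfying the height or width hypothesis.

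The delicate part is when the immediate neighbours of $\gamma$ are themselves $1$-cells, or when the relevant empty cells sit near a corner of $M$ where a column ends or a row narrows, so that Lemma~\ref{lem:two-column-chute} cannot be applied to $\gamma$ directly. I expect to handle these cases by applying Lemma~\ref{lem:chain-induction} to propagate the chain-obstruction condition through a strip of empty cells one column or one row further along, and by substituting Lemma~\ref{lem:chutable-rectangle-2} for Lemma~\ref{lem:chutable-rectangle} when the candidate rectangle dips outside $M$. The hard part will be the careful case analysis near the boundary corners of $M$, iterating these three lemmas to exclude all obstructions in turn; once this is done, a chutable rectangle whose chute remains in $M$ is guaranteed, completing the proof.
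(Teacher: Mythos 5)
Your outline has a genuine gap at its logical core. You justify the existence of a suitable empty neighbour of your cell $\gamma$ by appealing to ``the count equality of $0$-cells in $D$ and $D_{bot}(M,k)$'' via the first part of Theorem~\ref{thm:filling-dream}, but in the paper that identification (and in particular the purity statement that all maximal fillings have the same number of zeros and the same permutation $w(M,k)$) is \emph{deduced from} Theorem~\ref{thm:maximal-fillings-chutable}; it is explicitly noted there as a by-product. Using it here is circular, and without it your claim that the displaced $1$-cell $\gamma$ ``forces an empty neighbour satisfying the height or width hypothesis'' of Lemma~\ref{lem:two-column-chute} is unsupported. Indeed this is exactly the delicate point: the paper starts instead from a cell $\epsilon$ in the top-right quarter that is empty in the filling but filled in $D_{bot}(M,k)$, applies Lemma~\ref{lem:two-column-chute} only when the left or lower neighbour of $\epsilon$ contains a $1$, and otherwise must construct the cells $\alpha',\beta',\delta,\omega$ and rule out a maximal chain north-east of $\delta$ using the $k\times k$ square above $\epsilon$, intersection-freeness, and Lemma~\ref{lem:chutable-rectangle-2}, precisely in order to show that the south-west corner of the candidate rectangle lies in $M$. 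Your proposal defers all of this (``the hard part will be the careful case analysis near the boundary corners'') to an iteration of the three lemmas without saying how the obstructions are actually excluded, so the essential content of the proof is missing.

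Two smaller inaccuracies would also need repair. First, $D_{bot}(M,k)$ has its $1$-cells hugging the top-right (north-east) boundary of $M$, not pushed to the south-east as you state, so the geometric picture guiding your choice of $\gamma$ is backwards; note also that a ``NW-most'' cell is not well defined without specifying a tie-breaking rule, whereas the paper takes the topmost among the left-most such cells in the top-right quarter. Second, the symmetry you invoke for the inverse-chute statement is reflection in the NE--SW anti-diagonal, which sends chutable rectangles to chutable rectangles and hence does \emph{not} interchange chute and inverse chute moves; the correct symmetry is reflection in the NW--SE diagonal (transposition), which swaps the north-east and south-west corners of a rectangle, exchanges $D_{top}$ with $D_{bot}$, and preserves north-east chains. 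With that correction the reduction of the second assertion to the first is fine, but the first assertion itself still needs the detailed argument sketched above.
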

\begin{proof}
  Suppose that all cells in the top-right quarter of $M$ that contain
  a $1$ in $D_{bot}(M,k)$ also contain a $1$ in the filling $F$ at
  hand.  It follows, that all cells that are empty in $D_{bot}(M,k)$
  are empty in $F$, too, because there is a maximal chain for each of
  them.  Thus, in this case $F=D_{bot}(M,k)$.

  Otherwise, consider the set of left-most cells in the top-right
  quarter, that contain a $1$ in $D_{bot}(M,k)$ but are empty in $F$,
  and among those the top cell, $\epsilon$.  If its left or lower
  neighbour contains a $1$, we can apply
  Lemma~\ref{lem:two-column-chute} and are done.  Otherwise, we have
  to find a rectangle as in the hypothesis of
  Lemma~\ref{lem:chutable-rectangle}.  The difficulty in this
  undertaking is to prove that the lower left corner is indeed part
  of the polyomino.  To ease the understanding of the argument, we
  will frequently refer to the following sketch:

  \begin{center}
    \setlength{\unitlength}{0.5cm}
    \begin{picture}(21,12)
      \put(0,9){\line(1,0){2}}
      \put(2,9){\line(0,1){1}}
      \put(2,10){\line(1,0){1.5}}
      \multiput(3.5,10)(0.5,0){35}{\line(1,0){0.1}}
      \put(21,10.1){$R$}
      \put(3.8,10.3){\adots}
      \put(5,11){\line(1,0){1}}
      \put(6,11){\line(0,1){1}}
      \multiput(6,0)(0,0.5){24}{\line(0,1){0.25}}
      \multiput(6,12)(0.5,0){8}{\line(1,0){0.25}}
      \put(1,7){\framebox(1,1){$\x^\alpha$}}
      \put(2,7){$\overbrace{\makebox(4,1){}}^\ell$}
      \put(1,7){\dashbox{0.3}(15,1){}}
      \put(10,7){\framebox(1,1){$\epsilon$}}
      \put(10,8){\framebox(4,4){$k\times k$}}
      \put(10,8){\makebox(1,1){$\x$}}
      \put(10,11){\makebox(1,1){$\x$}}
      \put(13,8){\makebox(1,1){$\x$}}
      \put(13,11){\makebox(1,1){$\x$}}
      \put(10,0){\framebox(1,1){$\x^\beta$}}
      \put(10,0){\dashbox{0.3}(1,7){}}
      \put(3,5){\framebox(1,1){$\x^{\alpha'}$}}
      \put(12,2){\framebox(1,1){$\x^{\beta'}$}}
      \put(3,2){\framebox(1,1){$\omega$}}
      \put(12,5){\framebox(1,1){$\delta$}}
      \put(16,7){\framebox(4.7,3.7){$X$}}
      \put(16,7){\makebox(1,1){$\x$}}
      \put(16,9.7){\makebox(1,1){$\x$}}
      \put(19.7,7){\makebox(1,1){$\x$}}
      \put(19.7,9.7){\makebox(1,1){$\x$}}
      \put(14,8){\makebox(1,1){$\x$}}
      \put(15,8){\makebox(1,1){$\x$}}
      \put(14,9.7){\makebox(1,1){$\x$}}
      \put(15,9.7){\makebox(1,1){$\x$}}
    \end{picture}
  \end{center}

  By construction, there is a $k\times k$ square filled with ones
  just above $\epsilon$, and there is no $k$-chain strictly
  north-east of $\epsilon$.  This implies in particular that the top
  cell in the left-most column of the polyomino must be lower than
  the top row of the $k\times k$ square, because otherwise there
  could not be any maximal chain for $\epsilon$.

  By Lemma~\ref{lem:chain-induction} there must therefore be a
  non-empty cell left of $\epsilon$, which we label $\alpha$, and a
  non-empty cell below $\epsilon$, which we label $\beta$.  Note that
  there may be entries to the right of $\epsilon$, in the same row,
  which are non-empty.  However, we can assume that to the right of
  the first such entry all other cells in this row are non-empty,
  too, because otherwise we could apply
  Lemma~\ref{lem:two-column-chute}.

  We can now construct a chutable rectangle: let $\beta'$ be the top
  cell containing a $1$ below an empty cell weakly to the right of
  $\epsilon$, and if there are several, the left-most.  Also, let
  $\alpha'$ be the lowest cell among the right-most containing a $1$,
  which are weakly below $\alpha$, but strictly above $\beta'$.  Let
  $\delta$ be the cell in the same row as $\alpha'$ and the same
  column as $\beta'$.  Let $\omega$ be the cell in the same column as
  $\alpha'$ and the same row as $\beta'$ -- a priori we do not know
  however that $\omega$ is a cell in the polyomino.  We then apply
  Lemma~\ref{lem:chutable-rectangle-2} to the rectangle defined by
  $\alpha'$ and the first non-empty cell to the right of $\omega$, in
  the same row.

  To achieve our goal we show that there cannot be a maximal chain
  for $\delta$ north-east of $\delta$.  Suppose on the contrary that
  there is such a chain.  At least its top-right element must be in a
  row (denoted $R$ in the sketch) strictly above the top cell of the
  column containing $\alpha'$: otherwise, $\alpha'$ together with
  this chain would form a $(k+1)$-chain.  In the sketch, the
  non-empty cells that are implied are indicated by the rectangle
  denoted $X$, which must be of size $k\times k$ at least.

  If the size of the region to the right of $\alpha$ indicated by
  $\ell$ in the sketch is $0$ we let $\sigma$ be the bottom-left cell
  of a maximal chain for $\epsilon$.  Otherwise, define $\sigma$ to
  be the bottom-left cell of a maximal chain for the right neighbour
  of $\alpha$.  In both cases, $\sigma$ must be in a column weakly
  west of $\alpha$ -- in the latter case because there are fewer than
  $k$ cells above the right neighbour of $\alpha$.  

  By intersection free-ness applied to the column containing $\sigma$
  and the columns containing cells of $X$, the latter columns must
  all extend at least down to the row containing $\sigma$.  But in
  this case the maximal chain with bottom left cell $\sigma$ can be
  extended to a $(k+1)$-chain using the cells in $X$.

  We have shown that a maximal chain for $\delta$ must have some
  elements south-east of $\delta$.  We can now apply
  Lemma~\ref{lem:chutable-rectangle-2}.
\end{proof}

\begin{proof}[Proof of Theorem~\ref{thm:filling-dream}]
  All pipe dreams in $\Set{RC}(w)$ contained in $M$ are maximal
  $0$-$1$ fillings of $M$, since they can be generated by applying
  sequences of chute moves to $D_{top}(M,k)$.

  Since we can apply chute moves to any maximal $0$-$1$-filling of
  $M$ except $D_{bot}(M,k)$, all such fillings arise in this fashion.
  (We have to remark here that in case the pipe dream associated to
  some filling would not be reduced, applying chute moves eventually
  exhibits that the filling was not maximal.)  Together with
  Lemma~\ref{lem:chute-moon-closure}, this implies that all fillings
  $F_{01}^{ne}(M, k)$ have the same associated permutation.

  Note that this procedure implies, as a by-product, that all maximal
  $0$-$1$-fillings of $M$ have the same number of entries equal to
  zero, {\it i.e.}, the simplicial complex of $0$-$1$-fillings is
  pure.
\end{proof}

\section{Applying the Edelman-Greene correspondence}
\label{sec:Edelman-Greene}

Using the identification described in the previous section, we can
apply a correspondence due to Paul Edelman and Curtis
Greene~\cite{MR871081}, that associates pairs of tableaux to reduced
factorisations of permutations.  This in turn will yield the desired
bijective proof of Jakob Jonsson's result at least for stack
polyominoes.

The main result of this section was obtained for Ferrers shapes
earlier by Luis Serrano and Christian Stump~\cite{SerranoStump2010}
using the same proof strategy.  For stack polyominoes the description
of the $P$-tableau is different, thus we believe it is useful to
repeat the arguments here.

The following theorem is a collection of results from Paul Edelman
and Curtis Greene~\cite{MR871081}, Richard Stanley~\cite{MR782057}
and Alain Lascoux and Marcel-Paul Sch\"utzenberger~\cite{MR686357},
and describes properties of the \Dfn{Edelman-Greene} correspondence:
\begin{thm}\label{thm:edelman-greene}
  Let $w$ be a permutation and $s_i$ be the elementary transposition
  $(i, i+1)$.  Consider pairs of words $(u,v)$ of the same length
  $\ell$, such that $s_{v_1},s_{v_2},\dots,s_{v_\ell}$ is a reduced
  factorisation of $w$ and $u_i\leq u_{i+1}$, with $u_i=u_{i+1}$ only
  if $v_i>v_{i+1}$.

  There is a bijection between such pairs of words and pairs $(P, Q)$
  of Young tableaux of the same shape, such that $P$ is column and
  row strict and whose reading word is a reduced factorisation of
  $w$, and such that the transpose of $Q$ is semistandard.  Moreover,
  if $w$ is vexillary, {\it i.e.}, $2143$-avoiding, the tableau $P$
  is the same for all reduced factorisations of $w$.
\end{thm}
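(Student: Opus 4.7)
The plan is to combine the three cited results and verify their mutual consistency; no new mathematics is created here. The bijection itself is Edelman--Greene insertion: reading the biletters $(u_i,v_i)$ from left to right, insert each $v_i$ into the tableau $P$ via the modified row-bumping rule (if the row into which $v_i$ would be inserted already contains both $v_i$ and $v_i+1$, then insert $v_i+1$ into the next row down rather than bumping; otherwise $v_i$ displaces the leftmost entry strictly greater than it, which then cascades to the next row), and record the position of the newly added outer cell in $Q$ with the label $u_i$. The conditions $u_i \leq u_{i+1}$, with equality forced only when $v_i > v_{i+1}$, are precisely what is needed for successive equal-labelled insertions to form a vertical strip in $Q$; standard bookkeeping then shows that $Q^t$ is semistandard and that the modified bumping keeps $P$ strictly increasing along rows and columns.

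Second, to verify that the reading word of $P$ (read bottom-to-top, left-to-right within each row) is a reduced factorisation of $w$, I would invoke the central lemma of Edelman and Greene that their insertion preserves the \emph{Coxeter--Knuth} equivalence class of the inserted word; hence the reading word of $P$ is Coxeter--Knuth equivalent to $s_{v_1}s_{v_2}\cdots s_{v_\ell}$ and therefore also a reduced factorisation of the same element $w$. Invertibility is obtained by reading the cells of $Q$ in decreasing order of their labels and running the row-bumping in reverse, which is entirely standard.

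Finally, for the vexillary case I would invoke the expansion
\[
  F_w = \sum_T s_{\operatorname{sh}(T)}
\]
of the Stanley symmetric function, the sum running over all column-and-row-strict tableaux $T$ whose reading word is a reduced factorisation of $w$; this expansion is a direct consequence of Edelman--Greene insertion applied at the level of generating functions. By the theorem of Lascoux and Sch\"utzenberger, $w$ avoids the pattern $2143$ exactly when $F_w$ is a single Schur function, and this in turn happens exactly when only one such tableau $T$ exists. Since every reduced factorisation of $w$ inserts to one of these $T$, in the vexillary case the tableau $P$ is independent of the factorisation. The main obstacle in writing this out is organisational: the three source papers use slightly different conventions for row versus column insertion and for reading order, and these have to be carefully aligned with the formulation given above.
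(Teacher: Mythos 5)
The paper itself gives no proof of this theorem---it states it as a combination of results of Edelman--Greene, Stanley and Lascoux--Sch\"utzenberger and only records the insertion algorithm---and your proposal assembles exactly those ingredients (Edelman--Greene insertion with the vertical-strip bookkeeping for the compatible sequence $u$, Coxeter--Knuth invariance for the reading word of $P$, and the Stanley symmetric function expansion together with the vexillary/Schur-function characterisation), so it follows essentially the same route as the paper's citations. The one caution is the convention issue you already flag: with the stated insertion the expansion comes out as $F_w=\sum_T s_{\operatorname{sh}(T)'}$ rather than $\sum_T s_{\operatorname{sh}(T)}$ under some reading conventions, but either way a single Schur function forces a single $P$-tableau, so the vexillary conclusion is unaffected.
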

This correspondence can be defined via row insertion.  We insert a
letter $x$ into row $r$ of a tableau $P$ whose last letter is
different from $x$ as follows: if $x$ is (strictly) greater than all
letters in row $r$, we just append $x$ to row $r$.  If row $r$
contains both the letters $x$ and $x+1$ we insert $x+1$ into row
$r+1$.  Otherwise, let $y$ be the smallest letter in row $r$ that is
strictly greater than $x$, replace $y$ in row $r$ by $x$ and insert
$y$ into row $r+1$.

We can now construct the pair of tableaux $(P, Q)=(P_\ell, Q_\ell)$
from a pair of words $(u, v)$ as in the statement of
Theorem~\ref{thm:edelman-greene}: let $P_0$ and $Q_0$ be empty
tableaux.  Insert the letter $v_i$ into the first row of $P_{i-1}$ to
obtain $P_i$, and place the letter $u_i$ into the cell of $Q_i$
determined by the condition that $P_i$ and $Q_i$ have the same shape.

It turns out that the permutations associated to moon polyominoes are
indeed vexillary:
\begin{prop}\label{prop:vexillary}
  For any moon-polyomino $M$ and any $k$ the permutation $w(M, k)$ is
  vexillary.
\end{prop}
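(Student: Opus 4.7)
The plan is to give an explicit description of $w(M,k)$ and verify that it avoids the pattern $2143$ (a standard characterisation of vexillary permutations). I will use the minimal pipe dream $D_{bot}(M,k)$, whose structure is explicit from its definition: inside $M$, the elbow joints (ones in the filling) lie in a region of thickness at most $k$ along a specific portion of the boundary of $M$, while the crosses fill the remaining cells of $M$; all cells outside of $M$ contain elbow joints.

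First, I would trace each pipe from its entry at the top of the quarter plane through $D_{bot}(M,k)$. A pipe entering in column $j$ descends through crosses within $M$ and through elbows outside of $M$, deflecting when it encounters an elbow-filled region. The convex and intersection-free structure of $M$ ensures that both the external elbow region and the internal elbow band have a monotone silhouette, so the pipe trajectory is completely determined by the column heights of $M$ and by $k$, yielding a closed-form description of $w(M,k)$.

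Second, from this explicit description I would verify $2143$-avoidance. The key ingredient is that the column heights of a moon polyomino, read from left to right, are unimodal (the tallest columns form a consecutive block, by intersection-freeness and convexity). This unimodality translates, via the pipe dream, to a monotonicity condition on the Lehmer code of $w(M,k)$, which in turn forbids any four indices $a<b<c<d$ from satisfying $w(b)<w(a)<w(d)<w(c)$.

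The main obstacle is obtaining a sufficiently clean formula for $w(M,k)$ in the first step, owing to the combined effect of the external elbows (outside $M$) and the internal elbow band (within $M$) on pipe trajectories. An alternative strategy is induction on $k$: $w(M,0)$ is the identity and thus vexillary, and for $k\geq 1$ one could show that passing from $w(M,k-1)$ to $w(M,k)$ amounts to multiplication by a reduced word of a specific shape along the boundary of $M$, an operation that preserves $2143$-avoidance.
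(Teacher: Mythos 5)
Your proposal has two genuine gaps. The main route never materialises: the promised closed-form description of $w(M,k)$ is exactly the hard part, and the claim that the pipe trajectories are ``completely determined by the column heights of $M$ and by $k$'' is not correct --- for a moon polyomino the vertical offsets of the columns matter, not just their heights (a single cell in column $1$ gives $w=s_1$, a single cell in column $2$ gives $s_2$), so an argument that only sees the unimodal sequence of column heights cannot pin down $w(M,k)$, let alone its Lehmer code; the chain ``unimodal heights $\Rightarrow$ monotone code $\Rightarrow$ no $2143$'' is asserted, not proved. The fallback induction is worse: its base case is false. For $k=0$ the unique maximal filling has no ones at all, so every cell of $M$ becomes a cross and $w(M,0)$ is in general a long, highly nontrivial permutation (a single cell already gives a simple transposition; the staircase gives $w_0$); it is emphatically not the identity. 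Moreover the inductive step --- that multiplying by a reduced word ``of a specific shape'' preserves $2143$-avoidance --- is unsubstantiated and false in general.

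For comparison, the paper's proof reduces in the opposite direction: since the empty cells of $D_{top}(M,k)$ themselves form a moon polyomino, it suffices to treat $k=0$, where all cells of $M$ are crosses --- so the $k=0$ case carries all the content rather than being trivial. There, if $w(M,0)$ contained a pattern $2143$ at positions $i<j<k<\ell$ with $w(j)<w(i)<w(\ell)<w(k)$, the pipes entering in columns $i$ and $j$ would cross, as would those entering in columns $k$ and $\ell$, both crossings lying in cells of $M$; since any two cells of a moon polyomino are joined by a path of neighbouring cells with at most one turn, one finds a third cell of $M$ (hence a cross) where the pipes from $i$ and $\ell$, or from $j$ and $k$, meet --- but these pairs are non-inversions of $w$, so they cannot cross in a reduced pipe dream. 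If you want to salvage your approach, this geometric use of convexity and intersection-freeness is the ingredient your unimodality argument would have to replace.
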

\begin{rmk}
  There are vexillary permutations which do not correspond to moon
  polyominoes.  For example, the only two reduced pipe dreams for the
  permutation $4,2,5,1,3$ are
  \begin{equation*}
    \begin{tikzpicture}
      \content{0.475}{(0, 0)}{%
        0/0/+,1/0/+,2/0/+,3/0/\x,4/0/\x,%
        0/1/+,1/1/\x,2/1/+,3/1/\x,%
        0/2/+,1/2/\x,2/2/\x,%
        0/3/\x,1/3/\x,%
        0/4/\x}%
      \content{0.475}{(0, 0.475)}{%
        0/0/$1$,1/0/$2$,2/0/$3$,3/0/$4$,4/0/$5$,
        -1/1/$4$,-1/2/$2$,-1/3/$5$,-1/4/$1$,-1/5/$3$}%
    \end{tikzpicture}
    \quad\raisebox{40pt}{\text{and}}\quad
    \begin{tikzpicture}
      \content{0.475}{(0, 0)}{%
        0/0/+,1/0/+,2/0/+,3/0/\x,4/0/\x,%
        0/1/+,1/1/\x,2/1/\x,3/1/\x,%
        0/2/+,1/2/+,2/2/\x,%
        0/3/\x,1/3/\x,%
        0/4/\x}%
      \content{0.475}{(0, 0.475)}{%
        0/0/$1$,1/0/$2$,2/0/$3$,3/0/$4$,4/0/$5$,
        -1/1/$4$,-1/2/$2$,-1/3/$5$,-1/4/$1$,-1/5/$3$}%
    \end{tikzpicture}
  \end{equation*}

\end{rmk}
\begin{proof}
  It is sufficient to prove the claim for $k=0$, since the empty cells
  in the filling $D_{top}(M, k)$ for any $k$ again form a moon
  polyomino.  Thus, suppose that the permutation associated to $M$ is
  not vexillary.  Then we have indices $i<j<k<\ell$ such that
  $w(j)<w(i)<w(\ell)<w(k)$.  It follows that the pipes entering in
  columns $i$ and $j$ from above cross, and so do the two pipes
  entering in columns $k$ and $\ell$, and thus correspond to cells of
  the moon polyomino.  Since any two cells in the moon polyomino can
  be connected by a path of neighbouring cells changing direction at
  most once, there is a third cell where either the pipes entering
  from $i$ and $\ell$ or from $j$ and $k$ cross, which is impossible.
\end{proof}

\begin{thm}[for Ferrers shapes, Luis Serrano and Christian Stump \cite{SerranoStump2010}]\label{thm:ne-se}
  Consider the set $\Set F_{01}^{ne}(S, k, \Mat r)$, where $S$ is a
  stack polyomino.  Let $\mu_i$ be the number of cells the
  $i$\textsuperscript{th} row of $S$ is indented to the right, and
  suppose that $\mu_1=\dots =\mu_k=\mu_{k+1}=0$.

  Let $u$ be the word $1^{\Mat r_1}, 2^{\Mat r_2},\dots$ and let $v$
  be the reduced factorisation of $w$ associated to a given pipe
  dream.  Then the Edelman-Greene correspondence applied to the pair
  of words $(u, v)$ induces a bijection between $\Set F_{01}^{ne}(S, k,
  \Mat r)$ and the set of pairs $(P, Q)$ of Young tableaux satisfying
  the following conditions:
  \begin{itemize}
  \item the common shape of $P$ and $Q$ is the multiset of column
    heights of the empty cells in $D_{top}(S, k)$,
  \item the first row of $P$ equals $(k+1, k+2+\mu_{k+2},
    k+3+\mu_{k+3},\dots)$, and the entries in columns are
    consecutive,
  \item $Q$ has type $\{1^{\Mat r_1}, 2^{\Mat r_2},\dots\}$, and
    entries in column $i$ are at most $i+k$.
  \end{itemize}

  Thus, the common shape of $P$ and $Q$ encodes the row lengths of
  $S$, the entries of the first row of $P$ encode the left border
  of $S$, and the entries of $Q$ encode the filling.
\end{thm}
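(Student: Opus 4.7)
The plan is to combine Theorem~\ref{thm:filling-dream} with the Edelman-Greene correspondence of Theorem~\ref{thm:edelman-greene}, exploiting the vexillarity of $w(S,k)$ established in Proposition~\ref{prop:vexillary}. By Theorem~\ref{thm:filling-dream}, the set $\Set F_{01}^{ne}(S, k, \Mat r)$ is identified with the reduced pipe dreams in $\Set{RC}(w(S,k), \Mat r)$ whose crosses all lie inside $S$. From such a pipe dream I would read off the pair of words $(u,v)$ by enumerating its crosses row by row from top to bottom and, within each row, from right to left: $v_m = i+j-1$ is the transposition index associated with a cross at $(i,j)$, and $u_m = i$ records its row. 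Within any single row the entries of $u$ are constant while those of $v$ strictly decrease, so $(u,v)$ satisfies the compatibility hypothesis of Theorem~\ref{thm:edelman-greene}, and $u$ has the content $\{1^{\Mat r_1}, 2^{\Mat r_2}, \dots\}$ dictated by $\Mat r$.

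Since $w(S,k)$ is vexillary, Theorem~\ref{thm:edelman-greene} then guarantees that the tableau $P$ produced by Edelman-Greene is independent of the choice of pipe dream, and $(u,v) \mapsto Q$ is a bijection from $\Set{RC}(w(S,k), \Mat r)$ onto tableaux $Q$ of that fixed shape, with content $\{1^{\Mat r_1}, 2^{\Mat r_2}, \dots\}$ and transpose semistandard. It therefore remains (i) to determine $P$ explicitly, and (ii) to characterise which tableaux $Q$ correspond to pipe dreams fitting inside $S$.

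For (i), I would take advantage of the independence of $P$ from the particular pipe dream by applying row-insertion to the specific reduced word coming from $D_{top}(S,k)$, whose crosses are precisely the cells not covered by any $k\times k$ rectangle with lower-left corner on the left/top boundary of $S$. This regular structure allows the insertion to be analysed by an essentially diagonal-by-diagonal computation: each diagonal of zeros in $D_{top}(S,k)$ should contribute exactly one column of $P$ consisting of consecutive integers, and the first row should read off the left border of the polyomino, producing $(k+1,\, k+2+\mu_{k+2},\, k+3+\mu_{k+3},\, \dots)$. The hypothesis $\mu_1=\dots=\mu_{k+1}=0$ guarantees that the initial entry is $k+1$, and by construction the resulting shape is the multiset of column heights of the empty cells of $D_{top}(S,k)$.

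For (ii), I would observe that requiring every cross to lie inside $S$ bounds the row index of a cross by a function of its column, and hence bounds the entries of $u$ that can appear at a given step of the insertion. Tracing this through Edelman-Greene, the geometric constraint translates into the inequality $Q_{ij} \leq i+k$ on the entries of column $i$ of $Q$. For the reverse direction, one runs Edelman-Greene backward starting from any admissible $Q$ and verifies step by step that no cross ever escapes $S$; this reduces again to the same inequality, now used as an invariant rather than a consequence. The main obstacle is step (i) — the explicit evaluation of $P$ for $D_{top}(S,k)$ — which requires a careful tracking of row-insertion through the many diagonals of the stack polyomino; but once $P$ is computed for a single pipe dream, the vexillary property promotes the result to all pipe dreams, and the theorem follows.
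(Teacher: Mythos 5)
Your overall strategy is the same as the paper's: identify $\Set F_{01}^{ne}(S,k,\Mat r)$ with the pipe dreams in $\Set{RC}\big(w(S,k),\Mat r\big)$ lying inside $S$ via Theorem~\ref{thm:filling-dream}, read off the compatible pair $(u,v)$, use Proposition~\ref{prop:vexillary} to fix $P$, and compute $P$ by inserting the word of $D_{top}(S,k)$. The gap is in your step (ii), which is the actual content of the theorem. Saying that ``tracing the geometric constraint through Edelman--Greene'' yields $Q_{ij}\leq i+k$, and that running the insertion backwards ``verifies step by step'' that no cross escapes $S$, is a restatement of the two directions of the claim, not a mechanism: the bound $i+k$ is not a local consequence of where an individual cross sits, but a statement about \emph{when column $i$ of the common shape is completed} during the insertion, and nothing in your sketch explains why this happens exactly while the letters coming from the first $i+k$ rows are being inserted (nor why reverse bumping from an admissible $Q$ keeps the extracted letters inside $S$). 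The paper's argument supplies precisely this link: entries in column $i$ of $Q$ are at most $i+k$ if and only if the first $i$ columns of the shape are already present after inserting the prefix $\big((u_1,\dots,u_\ell),(v_1,\dots,v_\ell)\big)$ with $u_\ell\leq i+k$; by vexillarity (looking at $D_{top}$ of the partial permutation, whose empty cells again form a stack polyomino) this holds if and only if the partial permutation agrees with $w(S,k)$ in its first $i+k+\mu_{i+k+1}$ positions; and this in turn is equivalent to every later letter $v_m$ being at least $i+k+1+\mu_{i+k+1}$, which says exactly that the crosses in rows below the $(i+k)$\textsuperscript{th} lie weakly to the right of the left border of $S$, hence inside $S$. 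Without an argument of this kind controlling the growth of the insertion shape, both directions of (ii) remain open; note also that your forward direction misplaces the constraint --- $u$ is fixed by $\Mat r$, and what the geometry restricts is the letter $v_m$ relative to its row $u_m$.

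A secondary inaccuracy: you assert that $(u,v)\mapsto Q$ is a bijection from $\Set{RC}\big(w(S,k),\Mat r\big)$ onto \emph{all} transposed-semistandard tableaux $Q$ of the fixed shape with content $\{1^{\Mat r_1},2^{\Mat r_2},\dots\}$. The compatibility condition of Theorem~\ref{thm:edelman-greene} does not require $u_m\leq v_m$, so a compatible pair need not come from any pipe dream at all (the corresponding cross would sit in column $v_m-u_m+1\leq 0$); the map from pipe dreams to such $Q$ is injective but in general not surjective. This does not ruin the plan --- the surjectivity you actually need is onto the tableaux satisfying the column bound, and that is exactly what the equivalence above provides --- but as stated it cannot serve as an intermediate step.
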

\begin{rmk}
  In particular, this theorem implies an explicit bijection between
  the sets $\Set F_{01}^{ne}(S_1, k, \Mat r)$ and $\Set
  F_{01}^{ne}(S_2, k, \Mat r)$, given that the multisets of column
  heights of $S_1$ and $S_2$ coincide.

  Curiously, the most natural generalisation of the above theorem to
  moon polyominoes is not true.  Namely, one may be tempted to
  replace the condition on $Q$ by requiring that the entries of $Q$
  are between $Q_{top}$ and $Q_{bot}$ component-wise.  However, this
  fails already for $k=1$ and the shape
  \begin{equation*}
    \Yvcentermath0
    \young(:\hfil\hfil,%
    :\hfil\hfil,%
    \hfil\hfil\hfil,%
    \hfil\hfil\hfil)\,\,,
  \end{equation*}
  with $P=\young(345,5)$, $Q_{top}=\young(123,3)$ and
  $Q_{bot}=\young(234,4)$.  In this case, the tableau
  $Q=\young(124,3)$ has preimage
  \begin{equation*}
    \Yvcentermath0
    \young(:\x\hfil,%
    :\x\x\hfil,%
    \x\hfil\x,%
    \x\hfil\x).
  \end{equation*}
\end{rmk}
\begin{rmk}
  One might hope to prove Conjecture~\ref{cnj:lattice} by applying
  the Edelman-Greene correspondence, and checking that the poset is a
  lattice on the tableaux.  However, at least for the natural
  component-wise order on tableaux, the correspondence is not order
  preserving, not even for the case of Ferrers shapes.
\end{rmk}
\begin{proof}
  In view of Proposition~\ref{prop:vexillary}, to obtain the tableau
  $P$ it is enough to insert the reduced word given by the filling
  $D_{top}(S, k)$ using the Edelman-Greene correspondence, which is
  not hard for stack polyominoes.

  It remains to prove that the entries in column $i$ of $Q$ are at
  most $i+k$ precisely if $(u,v)$ comes from a filling in $\Set
  F_{01}^{ne}(S, k)$.  To this end, observe that the shape of the
  first $i$ columns of $P$ equals the shape of the tableau obtained
  after inserting the pair of words $\left((u_1,
    u_2,\dots,u_\ell),(v_1, v_2,\dots,v_\ell)\right)$, where $\ell$
  is such that $u_\ell\leq k+i$ and $u_{\ell+1}>k+i$.

  Namely, this is the case if and only if the first $i+k+\mu_{i+k+1}$
  positions of the permutation corresponding to $(v_1,
  v_2,\dots,v_\ell)$ coincide with those of the permutation $w$
  corresponding to $v$ itself, as can be seen by considering
  $D_{top}(w)$, whose empty cells form again a stack polyomino.

  This in turn is equivalent to all letters $v_m$ being at least
  $k+i+1+\mu_{k+i+1}$ for $m>\ell$, {\it i.e.}, whenever the
  corresponding empty cell of the filling occurs in a row below the
  $(i+k)$\textsuperscript{th} of $S$, and thus, when it is inside
  $S$.
\end{proof}

\section*{Acknowledgements}

I am very grateful to my wife for encouraging me to write this note,
and for her constant support throughout.  I would also like to thank
Thomas Lam and Richard Stanley for extremely fast replies concerning
questions about Theorem~\ref{thm:edelman-greene}.

I would like to acknowledge that Christian Stump provided a
preliminary version of \cite{Stump2010}.  Luis Serrano and Christian
Stump informed me privately that they were able to prove that all
$k$-fillings of Ferrers shapes yield the same permutation $w$,
however, their ideas would not work for stack polyominoes.  I was
thus motivated to attempt the more general case.
\providecommand{\cocoa} {\mbox{\rm C\kern-.13em o\kern-.07em C\kern-.13em
  o\kern-.15em A}}
\providecommand{\bysame}{\leavevmode\hbox to3em{\hrulefill}\thinspace}
\providecommand{\MR}{\relax\ifhmode\unskip\space\fi MR }
\providecommand{\MRhref}[2]{%
  \href{http://www.ams.org/mathscinet-getitem?mr=#1}{#2}
}
\providecommand{\href}[2]{#2}

\end{document}